\newcommand{\BibTeX}{{\scshape Bib}\kern-.08em\TeX}
\newcommand{\T}{\S\kern .15em\relax }
\newcommand{\AMS}{$\mathcal{A}$\kern-.1667em\lower.5ex\hbox
        {$\mathcal{M}$}\kern-.125em$\mathcal{S}$}
\DeclareMathOperator{\Hom}{Hom}
\DeclareMathOperator{\Pic}{Pic}
\DeclareMathOperator{\rang}{rk}
\DeclareMathOperator{\Spec}{Spec}
\title{\'Equidistribution et diff\'erentiabilit\'e}
\date{\today}
\author{Huayi Chen}
\address{Universit\'e Paris Diderot --- Paris 7, Institut de math\'ematiques de Jussieu.}
\email{chenhuayi@math.jussieu.fr}
\begin{document}
\def\smfbyname{}

\begin{abstract}
On propose un crit\`ere d'\'equidistribution par la diff\'erentiabilit\'e de certains invariants arithm\'etiques. Combin\'e avec la m\'ethode de pentes et les mesures asymptotiques, ce crit\`ere donne une nouvelle d\'emonstration ``conceptuelle'' des r\'esultats d'\'equidistribution obtenus initialement via le principe variationnel.
\end{abstract}

\begin{altabstract}
We propose a criterion of equidistribution by the differentiability of certain arithmetic invariants. Combined with the slope method and the asymptotic measures, this criterion gives a new ``conceptual'' proof to equidistribution results originally obtained via the variation principle.
\end{altabstract}
\maketitle

\tableofcontents
\section{Introduction}

Dans les probl\`emes d'\'equidistribution de nature
arithm\'etique, on s'int\'eresse au comportement
asymptotique d'une suite de mesures d\'efinies par des
points alg\'ebriques dans une
vari\'et\'e projective sur un corps de nombres.
L'approche arakelovienne de ces probl\`emes est
propos\'ee par Szpiro, Ullmo et Zhang. Bas\'e sur le
principe variationnel introduit dans leur article
\cite{Szpiro_Ullmo_Zhang}, des d\'eveloppements dans cette direction ont
\'et\'e men\'es dans des travaux comme \cite{Ullmo98,Zhang98,Autissier01,Autissier06,Chambert-Loir06} etc.

Soient $K$ un corps de nombres et $\overline K$ une
cl\^oture alg\'ebrique de $K$. Soit $X$ une vari\'et\'e
arithm\'etique projective sur $\Spec K$. On fixe un
plongement $\sigma$ de $K$ dans $\mathbb C$. Si $x$ est
un point de $X$ \`a valeur dans $\overline K$, alors il
definit une mesure de probabilit\'e bor\'elienne
\begin{equation}
\label{Equ:mesure eta}
\eta_{x,\sigma}:=\frac{1}{[K(x):K]}
\sum_{\begin{subarray}{c}
\widetilde\sigma:K(x)\rightarrow\mathbb C\\
\widetilde\sigma|_K=\sigma
\end{subarray}}\delta_{\sigma(x)}\end{equation}
sur $X^{\mathrm{an}}_\sigma$, o\`u pour tout $y\in
X_\sigma^{\mathrm{an}}$, $\delta_y$ d\'esigne la mesure
de Dirac concentr\'ee en $y$. On consid\`ere maintenant
une suite $(x_n)_{n\geqslant 1}$ de points dans $
X(\overline K)$, dans le probl\`eme d'\'equidistribution, on cherche des conditions sous lesquelles la suite de
mesure $(\eta_{x_n,\sigma})_{n\geqslant 1}$ converge
faiblement, ou de fa\c{c}on \'equivalente, la suite
d'int\'egrales $\big(\int_{X_\sigma^{\mathrm{an}}
}f\,\mathrm{d}\eta_{x_n,\sigma}\big)_{n\geqslant 1}$
converge dans $\mathbb R$ pour toute fonction continue
$f$ sur $X_\sigma^{\mathrm{an}}$.

Certainement la suite $(x_n)_{n\geqslant 1}$ ne devrait
pas \^etre quelconque. Dans la pratique, on demande
souvent la convergence de certaine fonction de hauteur
\'evalu\'ee en ces points. Dans la litt\'erature, cette
fonction de hauteur peut \^etre construite soit en
utilisant la th\'eorie des hauteurs \`a la Weil, soit
par la th\'eorie d'Arakelov. Dans cet article, on
adopte le deuxi\`eme point de vue. Dans la th\'eorie
d'Arakelov, les fonctions de hauteurs sont d\'efinies
relativement aux fibr\'es inversibles ad\'elique. On ne
demande ici aucune condition de positivit\'e ni
de lissit\'e aux m\'etriques. Soit $\overline L$ un
fibr\'e inversible ad\'elique sur $X$. Pour tout point
alg\'ebrique $x\in X(\overline K)$, l'image
r\'eciproque $x^*\overline L$ est un fibr\'e
inversible ad\'elique sur $\Spec{ K(x)}$, o\`u $K(x)$
est le corps de d\'efinition de $x$. La hauteur de $x$
par rapport \`a $\overline L$ est par d\'efinition le
{\it degr\'e d'Arakelov} normalis\'e
$\widehat{\deg}_n(x^*\overline L)$. L'avantage de cette
approche est que la fonction de hauteur ainsi d\'efinie
est automatiquement additive par rapport \`a $\overline
L$, autrement dit, si $\overline L_1$ et $\overline
L_2$ sont deux fibr\'es inversibles hermitiens sur $X$,
alors
\[h_{\overline L_1\otimes\overline L_2}(x)
=h_{\overline L_1}(x)+h_{\overline L_2}(x).\]

Soit $\widehat{\Pic}(X)$ le groupe des classes
d'isomorphisme de fibr\'es inversibles hermitiens sur
$X$. Pour toute fonction continue $f$ sur $
X_\sigma^{\mathrm{an}}$, on d\'esigne par $\mathcal
O_\sigma(f)$ le fibr\'e inversible hermitien dont le
fibr\'e inversible sous-jacent est $\mathcal O_X$, dont
la m\'etrique en toute place autre que $\sigma$ et
$\overline{\sigma}$ est triviale, et tel que,
\begin{equation}\forall\, x\in X_{\sigma}^{\mathrm{an}}, \quad
\|\mathbf{1}_x\|_{\sigma}=e^{-f(x)},\\
\end{equation}
o\`u $\mathbf{1}$ est la section de l'unit\'e de
$\mathcal O_{X}$. L'application $f\mapsto\mathcal
O_\sigma(f)$ est en fait un homomorphisme du groupe
(additif) des fonctions continues sur
$X_\sigma^{\mathrm{an}}$ vers $\widehat{\Pic}(X)$. On
d\'esigne par $C_\sigma(X)$ l'image de cette
application. Si $\underline{x}=(x_n)_{n\geqslant 1}$
est une suite de points alg\'ebriques dans $X $, on
d\'esigne par $\varphi_{\underline{x}}:\widehat{\Pic}(X
)\rightarrow\mathbb R\cup\{\pm\infty\}$ la fonction
d\'efinie par $\varphi_{\underline{x}}(\overline L
)=\displaystyle\liminf_{n\rightarrow+\infty
}h_{\overline L}(x_n)$. Dans cet article, on \'etablit le
r\'esultat suivant:
\begin{quote}\it
\hskip\parindent On suppose que $\overline L$ est un
fibr\'e inversible ad\'elique tel que la suite
$(h_{\overline L}(x_n))_{n\geqslant 1}$ converge. Alors
la suite de mesure $(\eta_{x_n,\sigma})_{n\geqslant 1}$
converge faiblement si et seulement si la fonction
$\varphi_{\underline{x}}$ est diff\'erentiable en $\overline
L$ pour les directions dans $C_\sigma(X )$. En outre,
la mesure limite co\"{\i}ncide \`a la d\'eriv\'ee de
$\varphi_{\underline{x}}$.
\end{quote}
On pr\'ecisera dans \S\ref{Sec:critere} la d\'efinition de la
d\'erivabilit\'e d'une fonction sur $\widehat{\Pic}(X)$
et on expliquera que ce r\'esultat --- combin\'e avec
un argument de convexit\'e --- permet d'unifier certaines preuves par le principe
variationnel. En effet, pour \'etablir la
d\'erivabilit\'e de la fonction
$\varphi_{\underline{x}}$ en $\overline L$, il suffit
de la minorer par une fonction $\psi$ qui est
diff\'erentiable en $\overline L$ et telle que
$\varphi_{\underline{x}}(\overline L)=\psi(\overline
L)$. Ce crit\`ere est souple et demande aucune
condition de positivit\'e {\it a priori} sur les
m\'etrique du fibr\'e ad\'elique $\overline L$.

Un r\'esultat de Zhang \cite{Zhang95} permet de minorer $\varphi_{\underline{x}}(\overline L)$ par la hauteur normalis\'ee de $X$ par rapport \`a $\overline L$ lorsque la suite $\underline{x}$ est g\'en\'erique. C'est un point cl\'e dans les r\'esultats d'\'equidistribution via le principe variationnel. Cependant, il semble que l'\'egalit\'e entre $\varphi_{\underline{x}}(\overline L)$ et la hauteur normalis\'ee de $X$ est une hypoth\`ese tr\`es forte. Par exemple, elle implique que la mesure asymptotique de $\overline L$ est une mesure de Dirac, ou encore, le polygone de Harder-Narasimhan asymptotique est un morceau de droite. En outre, la diff\'erentiabilit\'e de la hauteur normalis\'ee de $X$ par rapport \`a $\overline L$ est un point difficile, qui demande ou bien une condition de positivit\'e \`a $\overline L$, ou bien des techniques analytiques comme par exemple le noyau de Bergman qui, me semble-t-il, n'a pas encore des analogues ultram\'etriques.

Il s'av\`ere que la m\'ethode de pentes \`a la Bost \cite{BostBour96,Bost2001,BostICM}, appliqu\'ee \`a l'application d'\'evaluation en ces points, permet de facilement minorer $\varphi_{\underline{x}}(\overline L)$ par la pente maximale asymptotique $\widehat{\mu}_{\max}^\pi(\overline L)$, qui est la limite des pentes maximales normalis\'ees des images directes des puissances tensorielles de $\overline L$. La pente maximale asymptotique est tr\`es li\'ee \`a un invariant arithm\'etique appel\'e la {\it mesure asymptotique} de $\overline L$, not\'ee $\nu_{\overline L}$ (voir \cite{Chen08,Chen_bigness,Chen_Fujita}). C'est une mesure de probabilit\'e bor\'elienne sur $\mathbb R$. Cet invariant est d\'efini pour tout les fibr\'es ad\'eliques hermitiens $\overline L$ tels que $L$ est gros. La pente maximale asymptotique est en fait la borne sup\'erieure du support de $\nu_{\overline L}$. Lorsque $\overline L$ est arithm\'etiquement gros, ou de fa\c{c}on \'equivalente, $\widehat{\mu}_{\max}^\pi(\overline L)>0$, on a les relations suivantes~:
\begin{equation}\label{Equ:inequ}\widehat{\mu}_{\max}(\overline L)\geqslant\int_{\mathbb R}\max\{x,0\}\,\nu_{\overline L}(\mathrm{d}x)\geqslant\int_{\mathbb R}x\,\nu_{\overline L}(\mathrm{d}x),\end{equation}
o\`u les deux int\'egrales sont respectivement \'egales au volume arithm\'etique (au sens de Moriwaki) normalis\'e et \`a la capacit\'e sectionnelle normalis\'ee. Cette derni\`ere s'identifie \`a la hauteur normalis\'ee de $X$ lorsque $\overline L$ satisfait \`a certaines conditions de positivit\'e. Le crit\`ere d'\'equidistribution mentionn\'e plus haut et la diff\'erentiabilit\'e de la fonction volume arithm\'etique \'etablie dans \cite{Chen_Diff} donnent une nouvelle d\'emonstration ``conceptuelle'' des r\'esultats d'\'equidistribution via le principe variationnel.

Le crit\`ere d'\'equidistribution sugg\`ere que, une meilleure compr\'ehension sur le domaine de diff\'erentiabilit\'e de la fonction $\widehat{\mu}_{\max}^\pi$ devrait conduire \`a des th\'eor\`emes d'\'equidistribution plus g\'en\'eral.

L'article est organis\'e comme la suite. Dans le deuxi\`eme paragraphe, on rappelle des notions concernant les fibr\'es ad\'eliques hermitiens. Dans le troisi\`eme paragraphe, on \'enonce et d\'emontre le crit\`ere d'\'equidistribution. Ensuite, on discute dans le quatri\`eme paragraphe les minorations de la limite inf\'erieure des hauteurs. Enfin, dans le cinqui\`eme paragraphe, comme applications on interpr\`ete les preuves via le principe variationnel par le langage de diff\'erentiabilit\'e et donne une nouvelle preuve.

\bigskip

\noindent{\bf Remerciements.} Les r\'esultats dans cet article ont \'et\'e pr\'esent\'es dans une rencontre de l'ANR ``Berkovich'', je tiens \`a remercier les organisateurs et les participants. Je suis reconnaissant \`a D. Bertrand, J.-B. Bost, A. Chambert-Loir, C. Gasbarri, C. Mourougane et C. Soul\'e pour de tr\`es int\'eressantes discussions et remarques.

\section{Rappels sur les fibr\'es inversibles ad\'eliques}

Dans cet article, le symbole $K$ d\'esigne un corps de
nombres et $\mathcal O_K$ d\'esigne la cl\^oture
int\'egrale de $\mathbb Z$ dans $K$. On note $\Sigma$
l'ensemble des places de $K$, qui s'\'ecrit comme
l'union disjointe de deux parties $\Sigma_f$ et
$\Sigma_\infty$, o\`u $\Sigma_f$ est l'ensemble des
places finies de $K$, qui s'identifie au spectre maximal de
$\mathcal O_K$; et $\Sigma_\infty$ est
l'ensemble des plongements de $K$ dans $\mathbb C$. On
fixe en outre une vari\'et\'e int\`egre et projective
$X$ d\'efinie sur $K$ et on d\'esigne par
$\pi:X\rightarrow\Spec K$ le morphisme structurel. Soit
$d=\dim(X)$.

\subsection{M\'etriques sur un fibr\'e inversible}

Soit $L$ un $\mathcal O_X$-module inversible. Soit
$\sigma\in\Sigma_\infty$. Une {\it m\'etrique}
(continue) sur $L$ en $\sigma$ est la donn\'ee, pour
tout ouvert $U$ de l'espace analytique
$X_\sigma^{\mathrm{an}}$ et toute section continue
$s\in C^0(U,L_{\sigma})$, d'une fonction continue
$\|s\|_\sigma:U\rightarrow\mathbb R_{\geqslant 0}$,
soumise aux conditions suivantes~:
\begin{enumerate}[1)]
\item pour toute fonction continue $a:U\rightarrow\mathbb
R$, on a $\|as\|_{\sigma}=|a|\cdot\|s\|_{\sigma}$;
\item si $x\in U$ est tel que $s(x)\neq 0$, alors $\|s\|_{\sigma}(x)\neq
0$.
\end{enumerate}
On note $n_\sigma=1$.

Soient $\mathfrak p\in\Sigma_f$ un id\'eal maximal de
$\mathcal O_K$ et $\mathbb F_{\mathfrak p}=\mathcal
O_K/\mathfrak p$ le corps r\'esiduel de $\mathfrak p$.
Soit $|\cdot|_{\mathfrak p}$ la valeur absolue sur $K$
telle que
\[\forall\,a\in K^{\times},\quad|a|_{\mathfrak p}=p^{-v_{\mathfrak p}(a)},\]
o\`u $v_{\mathfrak p}$ est la valuation discr\`ete sur
$K$ correspondant \`a $\mathfrak p$, $p$ est la
caract\'eristique de $\mathbb F_{\mathfrak p}$. Soit en
outre $n_{\mathfrak p}:=[\mathbb F_{\mathfrak
p}:\mathbb Z/p\mathbb Z]$. On d\'esigne par
$K_{\mathfrak p}$ le compl\'et\'e de $K$ par rapport
\`a la valeur absolue $|\cdot|_{\mathfrak p}$, et par
$\mathbb C_{\mathfrak p}$ le compl\'et\'e d'une
cl\^oture alg\'ebrique de $K_{\mathfrak p}$. La valeur
absolue $|\cdot|_{\mathfrak p}$ s'\'etend de fa\c{c}on
unique sur $\mathbb C_{\mathfrak p}$, et le corps
$\mathbb C_{\mathfrak p}$ est alg\'ebriquement clos et
complet pour la valeur absolue $|\cdot|_{\mathfrak p}$.
D'apr\`es \cite[3.4.1]{Berkovich90}, le foncteur de la
cat\'egorie $\mathbf{An}_{K_{\mathfrak p}}$ des espaces
analytiques sur $K_{\mathfrak p}$ au sens de Berkovich
(cf. \cite[\S 3.1]{Berkovich90}) vers la cat\'egorie
des ensembles, qui envoie tout espace analytique $Y$ en
l'ensemble des morphismes d'espaces annel\'es en
$K_{\mathbb p}$-alg\`ebres $\Hom_{K_{\mathfrak p}}(Y,X_{K_{\mathfrak p}})$, est
repr\'esentable par un $K_{\mathfrak p} $-espace
analytique que l'on notera $X^{\mathrm{an}}_{{\mathfrak
p}}$. Le $\mathcal O_X$-module inversible $L$
correspond \`a un module inversible sur l'espace
analytique $X_{\mathfrak p}^{\mathrm{an}}$ que l'on
notera $L_{\mathfrak p}$. Une {\it m\'etrique} sur $L$
en $\mathfrak p$ est alors la donn\'ee, pour toute
partie ouverte $U$ de $X_{\mathfrak p}^{\mathrm{an}}$
et toute section continue $s\in C^0(U,L_{\mathfrak
p})$, d'une fonction continue $\|s\|_{\mathfrak
p}:U\rightarrow\mathbb R_{\geqslant 0}$, soumise aux
conditions suivantes~:
\begin{enumerate}[1)]
\item pour toute fonction continue $a:U\rightarrow\mathbb C_{\mathfrak
p}$, on a $\|as\|_{\mathfrak p}=|a|_{\mathfrak
p}\cdot\|s\|_{\mathfrak p}$;
\item si $x\in U$ est tel que $s(x)\neq 0$, alors
$\|s\|_{\sigma}(x)>0$.
\end{enumerate}

Soit $Z$ un sous-sch\'ema ouvert de $\Spec\mathcal O_K$
qui contient $\mathfrak p$ et $\mathcal O_Z$ son
anneau. On appelle mod\`ele de $(X,L)$ sur $Z$ tout
couple $(\mathscr X,\mathscr L)$, o\`u $\mathscr X$ est
un $X$-sch\'ema projectif et plat tel que $\mathscr
X_K=X$ et $\mathscr L$ est un faisceau inversible sur
$\mathscr X$ tel que $\mathscr L_K=L $.

\subsection{Fibr\'e inversible ad\'elique}

On appelle {\it fibr\'e inversible ad\'elique} sur $X$
tout couple $\overline
L=(L,(\|\cdot\|_v)_{v\in\Sigma})$, o\`u $L$ est un
$\mathcal O_X$-module inversible, et o\`u
$\|\cdot\|_{v}$ est une metrique sur $L$ en $v$, soumis
aux conditions suivantes~:
\begin{enumerate}[1)]
\item pour toute place finie $\mathfrak p\in\Sigma_f$,
$\|\cdot\|_{\mathfrak p}$ est invariante sous l'action
du groupe de Galois $\mathrm{Gal}(\mathbb C_v/K_v)$;
\item les m\'etriques $(\|\cdot\|_{\sigma})_{\sigma\in\Sigma_\infty}$
sont invariantes par la conjugaison complexe;
\item il existe un sous-sch\'ema ouvert non-vide $Z$ de
$\Spec\mathcal O_K$ et un mod\`ele $(\mathscr
X,\mathscr L)$ de $(X,L)$ sur $Z$ tel que, pour toute place
$v\in Z\cap\Sigma$, la m\'etrique $\|\cdot\|_{v}$ soit
induite par le mod\`ele $(\mathscr X,\mathscr L)$.
\end{enumerate}
Si $\overline L$ est un fibr\'e inversible ad\'elique
sur $X$, on d\'esigne par $\pi_*\overline L$ l'espace
vectoriel $H^0(X,L)$ sur $K$, muni des m\'etriques sup.
C'est un {\it fibr\'e vectoriel ad\'elique} sur $\Spec
K$ (cf. \cite{Gaudron07}, voir aussi un rappel dans \S
\ref{SubSec:rappel}).

On d\'esigne par $\widehat{\mathrm{Pic}}(X)$ le groupe
des classes d'isomorphismes de fibr\'es inversibles
ad\'eliques sur $X$. Si $\mathfrak p$ est une place
finie et si $f$ est une fonction continue sur
$X_{\mathfrak p}^{\mathrm{an}}$, suppos\'ee \^etre
invariante par l'action du groupe de Galois
$\mathrm{Gal}(\mathbb C_\mathfrak p/K_\mathfrak p)$
lorsque $\mathfrak p$ est finie, on d\'esigne par
$\mathcal O_{\mathfrak p}(f)$ le fibr\'e inversible
ad\'elique sur $X$ dont le faisceau inversible
sous-jacent est $\mathcal O_X$ et tel que
\begin{gather*}\forall\,x\in X_{\mathfrak p}^{\mathrm{an}},
\quad\|\mathbf{1}\|_{\mathfrak p}(x)=(\#\mathbb
F_{\mathfrak p})^{- f(x)},\\
\forall\,v\in\Sigma\setminus\{\mathfrak p\},\,\forall\,
y\in X_v^{\mathrm{an}},\quad \|\mathbf{1}\|_v(y)=1.
\end{gather*}
De fa\c{c}on similaire, si $\sigma:K\rightarrow\mathbb
C $ est un plongement et si $f$ est une fonction
continue sur $X_\sigma^{\mathrm{an}}$, on d\'esigne par
$\mathcal O_{\sigma}(f)$ le fibr\'e inversible
ad\'elique dont le faiceau inversible sous-jacent est
$\mathcal O_X$, et tel que
\begin{gather*}\forall\,x\in X_{\sigma}^{\mathrm{an}},
\quad\|\mathbf{1}\|_{\sigma}(x)=e^{- f(x)},\\
\forall\,v\in\Sigma\setminus\{\sigma,\overline{\sigma}\},\,\forall\,
y\in X_v^{\mathrm{an}},\quad \|\mathbf{1}\|_v(y)=1.
\end{gather*}
Pour toute place $v$, l'application du groupe additif
$C^0(X_v^{\mathrm{an}})$ vers le groupe
$\widehat{\Pic}(X)$ qui envoie $f$ vers $\mathcal
O_v(f)$ est en fait un homomorphisme de groupes. On
d\'esigne par $C_v(X)$ l'image de cette application.

Soient $\overline L$ et $\overline{L'}$ deux fibr\'es
inversibles ad\'eliques sur $X$ et $f:L\rightarrow L'$
un homomorphisme non-nul. Pour toute place $v\in\Sigma$
et tout $x\in X_{v}^{\mathrm{an}}$, l'homomorphisme $f$
induit une application $\mathbb C_v$-lin\'eaire
$f_x:L_{v,x}\rightarrow L_{v,x}'$. On note
\[h_{v}(f):=\sup_{x\in X_v^{\mathrm{an}}}\log\|f_x\|_v,\]
appel\'e la {\it hauteur locale} de $f$ en $v$. On
d\'efinit en outre
\[h(f)=\frac{1}{[K:\mathbb Q]}\sum_{v\in\Sigma}n_vh_v(f).\]

Soit $\overline L$ un fibr\'e inversible ad\'elique sur
$X$. On dit qu'une section globale $s\in H^0(X,L)$ est
{\it effective} si, pour toute $v\in\Sigma$,
\[\|s\|_{v,\sup}:=\sup_{x\in X_v^{\mathrm{an}}}\|s\|_v(x)\leqslant 1.\]
On dit que $\overline L$ est {\it effectif} s'il admet
au moins une section effective non-nulle. Un fibr\'e
inversible ad\'elique $\overline L$ est effectif si et
seulement s'il existe un homomorphisme
$f:\overline{\mathcal O}_X\rightarrow\overline L$ tel
que $h_v(f)\leqslant 0$ pour toute $v$, o\`u
$\overline{\mathcal O}_X$ d\'esigne le fibr\'e
inversible ad\'elique trivial sur $X$.

\def\skip{On dit qu'un fibr\'e inversible

Soient $\overline L_1$ et $\overline L_2$ deux fibr\'es
inversibles hermitiens sur $X$ et
$\varphi:L_1\rightarrow L_2$ un homomorphisme non-nul.}

\section{Crit\`ere d'\'equidistribution par la diff\'erentiabilit\'e}
\label{Sec:critere}

On fixe une place $v$ dans $\Sigma$ et on d\'esigne par
$C_v(X)$ le sous-groupe de $\widehat{\Pic}(X)$ form\'e
des fibr\'es inversibles hermitiens de la forme
$\mathcal O_v(f)$, o\`u $f$ parcourt les fonctions
continues sur l'espace analytique $ X_v^{\mathrm{an}}$.

\subsection{Fonctions d\'erivables sur un semi-groupe}

Dans ce sous-paragraphe, on clarifie la notion de
d\'erivabilit\'e pour les fonctions sur
$\widehat{\Pic}(X)$. Par la soucis de lucidit\'e de
pr\'esentation, on
choisit de travailler dans un cadre g\'en\'eral d'un
groupe commutatitif $G$ dont la loi de groupe est
not\'e additivement.

Un {\it semi-groupe} dans $G$
est par d\'efinition un sous-ensemble $C$
de $G$ qui v\'erifie la condition suivante:
\begin{quote}
si $x$ et $y$ sont deux \'el\'ements dans
$C$, alors $x+y\in C$.
\end{quote}
Si $C$ est un semi-groupe dans $G$ et si $H$
est un sous-groupe de $G$, on dit que $C$ est
{\it ouvert} par rapport \`a $H$ si, pour
tout $x\in C$ et tout $y\in H$, il existe un
entier $n\geqslant 1$ tel que $nx+y\in C$, ou
de fa\c{c}on \'equivalente, $nx+y\in C$ pour
tout entier $n$ suffisamment positif.

Soient $H$ un sous-groupe de $G$ et $C$ un
semi-groupe de $G$ qui est ouvert par rapport
\`a $H$. Soit $f:C\rightarrow\mathbb R$ une
fonction, suppos\'ee \^etre {\it positivement homog\`ene}, i.e.,
pour tout $x\in C$ et tout entier $n\geqslant
1 $, on a $f(nx)=nf(x)$. Si $x\in C$ et si
$v\in H$, on dit que la fonction $f$ est {\it
d\'erivable} en $x$ le long de la direction
de $v$ si la suite
$(f(nx+v)-f(nx))_{n\geqslant 1}$ converge
dans $\mathbb R$. On dit que la fonction $f$
est {\it diff\'erentiable} en $x$ pour les
directions dans $H$ si elle est d\'erivable
en $x$ le long de toute $w\in H$, et si
l'application $D_xf:H\rightarrow\mathbb R$
qui associe $w\in H$ en
$\displaystyle\lim_{n\rightarrow\infty}f(nx+w)-f(nx)$
est additive, autrement dit,
$D_xf(u+w)=D_xf(u)+D_xf(w)$ quels que soit
$u,w\in H$. L'application $D_xf$ est
appel\'ee la {\it diffentielle} de $f$ en $x$
(relativement \`a $H$).

\begin{rema}\label{Rem:invariance de differentiabilite}
La diff\'erentiabilit\'e ne d\'epend pas du semi-groupe
de d\'efinition. On suppose que $C_1$ et $C_2$ sont
deux semi-groupes ouverts par rapport \`a $H$. Si $f_1$
(resp. $f_2$) est une fonction sur $C_1$ (resp. $C_2$)
dont les r\'estrictions \`a $C_1\cap C_2$ se
co\"{\i}ncident. Pour tout \'el\'ement $x\in C_1\cap
C_2$, la diff\'erentiabilit\'e de $f_1$ en $x$
pour les directions dans $H$ est \'equivalente \`a celle de
$f_2$ en $x$ pour les directions dans $H$.
\end{rema}

Le lemme suivant est utile dans la
d\'emonstration de la proposition
\ref{Pro:critere de differentiabilite}, qui
est un crit\`ere de diff\'erentiabilit\'e.

\begin{lemm}\label{Lem:critere diff}
Soit $H$ un sous-groupe de $G$. Si
$\varphi:H\rightarrow\mathbb
R\cup\{+\infty\}$ est une fonction
sur-additive (c'est-\`a-dire $\forall\,
x,y\in H,\,
\varphi(x+y)\geqslant\varphi(x)+\varphi(y)$)
et qui n'est pas identiquement infinie, et si
$\psi:H\rightarrow\mathbb R$ est une fonction
additive telle que $\varphi\geqslant\psi$,
alors $\varphi=\psi$.
\end{lemm}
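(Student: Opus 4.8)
The plan is to squeeze everything out of superadditivity, using the hypothesis ``not identically infinite'' only to license one cancellation. First I would pin down the value $\varphi(0)$. Since $\varphi$ is not identically $+\infty$, pick $x_0\in H$ with $\varphi(x_0)<+\infty$. Superadditivity applied to $x_0=x_0+0$ gives $\varphi(x_0)\geqslant\varphi(x_0)+\varphi(0)$, and since $\varphi(x_0)$ is a real number we may cancel it to get $\varphi(0)\leqslant 0$. On the other hand $\psi$ is additive, so $\psi(0)=0$, and the hypothesis $\varphi\geqslant\psi$ forces $\varphi(0)\geqslant 0$. Hence $\varphi(0)=0$.

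Next, fix an arbitrary $x\in H$ and apply superadditivity to $0=x+(-x)$:
\[0=\varphi(0)\geqslant\varphi(x)+\varphi(-x)\geqslant\psi(x)+\psi(-x)=\psi(0)=0,\]
where the middle inequality uses $\varphi\geqslant\psi$ at both $x$ and $-x$. Thus the entire chain consists of equalities; in particular $\varphi(x)+\varphi(-x)=0$ is finite, so both $\varphi(x)$ and $\varphi(-x)$ are real numbers, and $\varphi(x)+\varphi(-x)=\psi(x)+\psi(-x)$. Combining this with the two inequalities $\varphi(x)\geqslant\psi(x)$ and $\varphi(-x)\geqslant\psi(-x)$ forces each of them to be an equality, whence $\varphi(x)=\psi(x)$. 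As $x$ was arbitrary, $\varphi=\psi$.

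I do not expect any real obstacle: the only delicate point is the bookkeeping with the value $+\infty$. One must invoke the ``not identically infinite'' hypothesis exactly once, precisely to produce an element $x_0$ at which the cancellation $\varphi(x_0)-\varphi(x_0)$ is legitimate, and then observe that no forbidden expression $\infty-\infty$ ever arises, because $\varphi$ never takes the value $-\infty$ and $\psi$ is everywhere finite. Everything else is a two-line manipulation of the superadditivity inequality.
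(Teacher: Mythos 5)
Your proof is correct and uses essentially the same ideas as the paper's. The paper sets $\eta=\varphi-\psi$ (a nonnegative super-additive function) and argues by contradiction: if $\eta(x)>0$ somewhere, then $\eta(\theta)\geqslant\eta(x)+\eta(-x)>0$, while the existence of a point where $\eta$ is finite forces $\eta(\theta)\leqslant 0$. You run the same two observations — super-additivity applied to $x+(-x)=0$ and to $y+0=y$ together with the hypothesis that $\varphi$ is not identically infinite — but in a direct, constructive order: first pin down $\varphi(0)=0$, then squeeze $\varphi(x)=\psi(x)$ from the sandwich $0\geqslant\varphi(x)+\varphi(-x)\geqslant\psi(x)+\psi(-x)=0$. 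The bookkeeping around $+\infty$ is handled cleanly in both.
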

\begin{proof}
Soit $\eta=\varphi-\psi$. La fonction $\eta$
est sur-additive sur $H$. De plus, elle est
positive. On d\'esigne par $\theta$
l'\'el\'ement neutre du groupe $H$. Si $x\in
H$ est un \'el\'ement tel que $\eta(x)>0$,
alors
$\eta(\theta)=\eta(x+(-x))\geqslant\eta(x)+\eta(-x)>0$.
En outre, la fonction $\eta$ n'est pas
identiquement infinie, il existe $y\in H$ tel
que $\eta(y)<+\infty$. Comme
$\eta(y)=\eta(y+\theta)\geqslant\eta(y)+\eta(\theta)$,
on obtient $\eta(\theta)\leqslant 0$. Cela
est absurde.
\end{proof}

\begin{prop}
\label{Pro:critere de differentiabilite} Soient $H$ un
sous-groupe de $G$, $C$ un semi-groupe dans $G$ qui est
ouvert par rapport \`a $H$, et $x\in C$. Si $f$ et $g$
sont deux fonctions r\'eelles positivement homog\`enes sur $C$ qui
satisfont aux conditions suivantes:
\begin{enumerate}[1)]
\item $\forall a,b\in C,\,f(a+b)\geqslant
f(a)+f(b)$,
\item $f\geqslant g$, $f(x)=g(x)$,
\item $g$ est diff\'erentiable en $x$ pour
les directions dans $H$,
\end{enumerate}
alors la fonction $f$ est diff\'erentiable en
$x$ pour les directions dans $H$. De plus, on
a $D_xf=D_xg$.
\end{prop}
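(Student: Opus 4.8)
The plan is to reduce the statement to Lemma~\ref{Lem:critere diff}, applied to two suitable functions on $H$. Since $C$ is open with respect to $H$, for each $v\in H$ one has $nx+v\in C$ for all $n$ large enough, so $f(nx+v)$ and $g(nx+v)$ are eventually defined; and by positive homogeneity $f(nx+v)-f(nx)=f(nx+v)-nf(x)$, and similarly for $g$. The crucial preliminary observation is that, for a fixed $v\in H$, the sequence $u_n(v):=f(nx+v)-nf(x)$ (defined for $n\gg 0$) is non-decreasing: by super-additivity of $f$, $f((n+1)x+v)\geqslant f(nx+v)+f(x)$, whence $u_{n+1}(v)\geqslant u_n(v)$. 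Consequently $\varphi(v):=\lim_{n\to\infty}u_n(v)$ exists in $\mathbb R\cup\{+\infty\}$.

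Next I would check the two properties of $\varphi$ needed to invoke the lemma. First, $\varphi$ is super-additive on $H$: from $f(2nx+v+w)\geqslant f(nx+v)+f(nx+w)$ one gets $u_{2n}(v+w)\geqslant u_n(v)+u_n(w)$, and letting $n\to\infty$ gives $\varphi(v+w)\geqslant\varphi(v)+\varphi(w)$. Second, $\varphi$ is not identically $+\infty$: taking $v=\theta$, the neutral element of $H$, one has $u_n(\theta)=f(nx)-nf(x)=0$, so $\varphi(\theta)=0$.

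Then I would bring in $g$. By hypothesis~3 the map $\psi:=D_xg\colon H\to\mathbb R$, $\psi(v)=\lim_n\bigl(g(nx+v)-ng(x)\bigr)$, is well defined, real-valued and additive. Using $f\geqslant g$ on $C$ together with the identity $f(nx)=nf(x)=ng(x)=g(nx)$ (positive homogeneity of $f$ and $g$ and $f(x)=g(x)$), one obtains $u_n(v)\geqslant g(nx+v)-ng(x)$ for $n\gg 0$, hence $\varphi\geqslant\psi$ after passing to the limit. Now Lemma~\ref{Lem:critere diff} applies to $\varphi$ and $\psi$ and yields $\varphi=\psi$. In particular $\varphi$ is finite everywhere, so $(f(nx+v)-f(nx))_{n\geqslant 1}$ converges in $\mathbb R$ for every $v\in H$, i.e.\ $f$ is derivable at $x$ along all directions in $H$, and $D_xf(v)=\varphi(v)=\psi(v)=D_xg(v)$; the additivity of $\psi$ then shows that $f$ is differentiable at $x$ for the directions in $H$, with $D_xf=D_xg$.

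There is no serious obstacle here: the whole content sits in the monotonicity argument of the first paragraph and in the elementary verification that $\varphi\not\equiv+\infty$, which is exactly what makes Lemma~\ref{Lem:critere diff} applicable. The only point demanding a little care is the alignment of $f$ and $g$ after subtracting the ``linear part'' $nf(x)$, where all three of $f(nx)=nf(x)$, $g(nx)=ng(x)$ and $f(x)=g(x)$ are genuinely used.
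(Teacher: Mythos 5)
Your proof is correct and follows essentially the same route as the paper: establish monotonicity of $n\mapsto f(nx+w)-f(nx)$ via super-additivity to get a limit in $\mathbb R\cup\{+\infty\}$, check that this limit is super-additive and not identically $+\infty$ (via the value at $\theta$), bound it below by $D_xg$ using $f\geqslant g$ and $f(nx)=g(nx)$, and conclude with Lemma~\ref{Lem:critere diff}. The only difference is that you spell out the monotonicity step ($f((n+1)x+v)\geqslant f(nx+v)+f(x)$) which the paper merely asserts.
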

\begin{proof}
Pour tout \'el\'ement $w\in H$, il existe
$n_0(w)\in\mathbb N_*$ tel que $nx+w\in C$
quel que soit $n\geqslant n_0(w)$. De plus,
la suite $(f(nx+w)-f(nx))_{n\geqslant
n_0(w)}$ est croissante. On d\'esigne par
$D_xf$ la fonction sur $H$ \`a valeurs dans
$\mathbb R\cup\{+\infty\}$ d\'efinie par
$D_xf(w)=\displaystyle\lim_{n\rightarrow\infty}
f(nx+w)-f(nx)$. Soit $\theta$ l'\'el\'ement
neutre de $H$. Comme $D_xf(\theta)=0$, la
fonction $D_xf$ n'est pas identiquement
infinie. En outre, les fonctions $f$ et $g$
sont homog\`enes, et $f(x)=g(x)$, donc
$f(nx)=g(nx)$ quel que soit $n\in\mathbb
N_*$. Par cons\'equent, on a $D_xf\geqslant
D_xg$, o\`u la fonction
$D_xg:H\rightarrow\mathbb R$ est d\'efinie
comme
$D_xg(w)=\displaystyle\lim_{n\rightarrow\infty}
g(nx+w)-g(nx)$, qui est additive car $g$ est
diff\'erentiable en $x$. Enfin, si $u$ et $w$
sont deux \'el\'ements dans $H$, alors
\[\begin{split}&\quad\;D_xf(u+w)=
\lim_{n\rightarrow\infty}f(2nx+u+w)-2nf(x)
\\&\geqslant\lim_{n\rightarrow\infty}f(nx+u)+f(nx+w)-2nf(x)
=D_xf(u)+D_xf(w).\end{split}\] D'apr\`es le
lemme \ref{Lem:critere diff}, on obtient que
$D_xf=D_xg$ est additive, donc la fonction
$f$ est diff\'erentiable en $x$.
\end{proof}

\subsection{Crit\`ere d'\'equidistribution}

Dans ce sous-paragraphe, on d\'emontre un crit\`ere
d'\'equidistribution. On consid\`ere une suite
$\underline{x}=(x_n)_{n\geqslant 1}$ de points
alg\'ebriques dans $X$. On dit que la suite
$\underline{x}$ satisfait \`a la condition
d'\'equidistribution en $v\in\Sigma$ si la suite de
mesure $(\eta_{x_n,v})_{n\geqslant 1}$ converge
faiblement, ou de fa\c{c}on \'equivalente, pour tout
fibr\'e inversible hermitien $\mathcal O_v(f)\in C_v(
X)$, la suite $(h_{\mathcal O_v(f)}(x_n))_{n\geqslant
1}$ converge dans $\mathbb R$. Pour tout fibr\'e
inversible ad\'elique $\overline L$ dans
$\widehat{\Pic}( X)$, on d\'esigne par
$\varphi_{\underline{x}}(\overline L)$ l'\'el\'ement
$\displaystyle\liminf_{n\rightarrow+\infty}h_{\overline
L }(x_n)$ dans $\mathbb R\cup\{\pm\infty\}$.
\begin{prop}\label{Pro:equidistribution equivalent a ladditive}
Les conditions suivantes sont \'equivalentes:
\begin{enumerate}[1)]
\item la suite $\underline{x}$ satisfait \`a
la condition d'\'equidistribution;
\item la restriction de
$\varphi_{\underline{x}}$ \`a $C_v(X)$ est additive.
\end{enumerate}
\end{prop}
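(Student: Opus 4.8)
The plan is to show that conditions 1) and 2) are \emph{both} equivalent to the single assertion that, for every continuous function $f$ on $X_v^{\mathrm{an}}$, the sequence $(h_{\mathcal O_v(f)}(x_n))_{n\geqslant 1}$ converges in $\mathbb R$. For 1) this is nothing but the definition of the equidistribution condition recalled above, so the whole content is the equivalence of 2) with this convergence statement. The structural fact I would record first is that $\varphi_{\underline{x}}$ is \emph{super-additive}: since $h_{\overline L_1\otimes\overline L_2}(x_n)=h_{\overline L_1}(x_n)+h_{\overline L_2}(x_n)$ and the $\liminf$ of a sum dominates the sum of the $\liminf$s, one has $\varphi_{\underline{x}}(\overline L_1\otimes\overline L_2)\geqslant\varphi_{\underline{x}}(\overline L_1)+\varphi_{\underline{x}}(\overline L_2)$ whenever the right-hand side is defined. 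On $C_v(X)$ there is no indeterminacy: a continuous $f$ is bounded on the compact space $X_v^{\mathrm{an}}$, and comparing with the unit section $\mathbf{1}$ (which is effective for $\mathcal O_v(\|f\|_{\sup}\pm f)$, these adelic bundles being trivial outside $v$) gives $-h_{\mathcal O_v(\|f\|_{\sup})}(x)\leqslant h_{\mathcal O_v(f)}(x)\leqslant h_{\mathcal O_v(\|f\|_{\sup})}(x)$ for every algebraic point $x$, where $h_{\mathcal O_v(\|f\|_{\sup})}(\cdot)$ is a constant. Hence $\varphi_{\underline{x}}(\mathcal O_v(f))\in\mathbb R$ for every $f$ and each sequence $(h_{\mathcal O_v(f)}(x_n))_n$ is bounded.

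For the implication 1) $\Rightarrow$ 2): assuming $(h_{\mathcal O_v(f)}(x_n))_n$ converges for every $f$, we have $\varphi_{\underline{x}}(\mathcal O_v(f))=\lim_n h_{\mathcal O_v(f)}(x_n)$; then, using that $f\mapsto\mathcal O_v(f)$ is a group homomorphism and that $h$ is additive in the bundle, $\varphi_{\underline{x}}(\mathcal O_v(f)\otimes\mathcal O_v(g))=\varphi_{\underline{x}}(\mathcal O_v(f+g))=\lim_n(h_{\mathcal O_v(f)}(x_n)+h_{\mathcal O_v(g)}(x_n))=\varphi_{\underline{x}}(\mathcal O_v(f))+\varphi_{\underline{x}}(\mathcal O_v(g))$. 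Since every element of $C_v(X)$ is of the form $\mathcal O_v(f)$, this is precisely additivity of $\varphi_{\underline{x}}$ on $C_v(X)$.

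For 2) $\Rightarrow$ 1): fix $f$. From $\mathcal O_v(f)\otimes\mathcal O_v(-f)=\overline{\mathcal O}_X$ and $h_{\overline{\mathcal O}_X}(x_n)=0$, additivity yields $\varphi_{\underline{x}}(\mathcal O_v(f))+\varphi_{\underline{x}}(\mathcal O_v(-f))=0$. Since $h_{\mathcal O_v(-f)}(x_n)=-h_{\mathcal O_v(f)}(x_n)$, one has $\varphi_{\underline{x}}(\mathcal O_v(-f))=\liminf_n(-h_{\mathcal O_v(f)}(x_n))=-\limsup_n h_{\mathcal O_v(f)}(x_n)$, whence $\liminf_n h_{\mathcal O_v(f)}(x_n)=\limsup_n h_{\mathcal O_v(f)}(x_n)$; together with the boundedness recorded above, $(h_{\mathcal O_v(f)}(x_n))_n$ converges in $\mathbb R$. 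As $f$ is arbitrary, $\underline{x}$ satisfies the equidistribution condition.

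The argument is entirely formal once these ingredients are in place; the one delicate point --- the ``main obstacle'', such as it is --- is keeping track of the values $\pm\infty$, i.e.\ ensuring that $\varphi_{\underline{x}}$ is finite on $C_v(X)$ so that the relations $\varphi_{\underline{x}}(\mathcal O_v(f))+\varphi_{\underline{x}}(\mathcal O_v(-f))=0$ and the additivity identity never degenerate into $\infty-\infty$. This is exactly where compactness of $X_v^{\mathrm{an}}$ and the comparison with the constant-metric bundles $\mathcal O_v(\pm\|f\|_{\sup})$ enter; it is also a rigidity phenomenon for super-additive functions in the spirit of Lemma~\ref{Lem:critere diff}.
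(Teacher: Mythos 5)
Your proof is correct and follows essentially the same route as the paper's. The heart of both arguments is identical: for $1)\Rightarrow 2)$, use that the limit of a sum of convergent sequences is the sum of the limits; for $2)\Rightarrow 1)$, use additivity together with $h_{\mathcal O_v(-f)}(x_n)=-h_{\mathcal O_v(f)}(x_n)$ to deduce $\liminf_n h_{\mathcal O_v(f)}(x_n)=-\varphi_{\underline{x}}(\mathcal O_v(-f))=\limsup_n h_{\mathcal O_v(f)}(x_n)$. The only difference is that you spell out the boundedness of $(h_{\mathcal O_v(f)}(x_n))_n$ via compactness of $X_v^{\mathrm{an}}$ and comparison with the constant-metric bundles $\mathcal O_v(\pm\|f\|_{\sup})$, whereas the paper merely asserts it; your remark on super-additivity is a true observation but is not actually used in the argument.
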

\begin{proof} Pour tout fibr\'e inversible hermitien
$\mathcal O_v(f)$ dans $C_v(X)$, la suite $(h_{\mathcal
O_v(f)}(x_n))_{n\geqslant 1}$ est born\'ee. Donc
$\varphi_{\underline{x}}(\overline M)\in\mathbb R$ si
$\overline M\in C_v(X)$.

``1) $\Rightarrow$ 2)'' provient du fait que
la limite de la somme de deux suites
convergentes est \'egale \`a la somme des
limites de ces suites.

``2) $\Rightarrow$ 1)''~: Soit $\overline M$ un
\'el\'ement dans $C_v(X)$. Par l'additivit\'e de
$\varphi_{\underline{x}}$, on obtient
\[\liminf_{n\rightarrow\infty}h_{\overline
M}(x_n)=\varphi_{\underline{x}}(\overline M)=
-\varphi_{\underline{x}}(\overline
M^\vee)=-\liminf_{n\rightarrow\infty}(-h_{\overline
M
}(x_n))=\limsup_{n\rightarrow\infty}h_{\overline
M }(x_n).\]
\end{proof}

Si $\overline L $ est un fibr\'e inversible ad\'elique
sur $X$, alors le sous-ensemble
\[C_v(X,\overline
L):=\{\overline L^{\otimes n}\otimes\mathcal O_v(f)\mid
n\geqslant 1,\,\mathcal O_v(f)\in C_v(X) \}\] de
$\widehat{\Pic}(X)$ est un semi-groupe. On observe que,
si $\varphi_{\underline{x}}(\overline L)\in\mathbb R$,
alors la fonction $\varphi_{\underline{x}}$ est finie
sur $C_{v}(X,\overline L)$.

\begin{theo}\label{Thm:crietere equidistribution} Si une suite $\underline{x}=(x_n)_{n\geqslant 1}$
de points alg\'ebriques dans $X$ satisfait \`a la
condition d'\'equidistribution, alors pour tout fibr\'e
vectoriel ad\'elique $\overline L$ tel que
$\varphi_{\underline{x}}(\overline L)\in\mathbb R$, la
fonction $\varphi_{\underline{x}}$, consid\'er\'ee
comme une fonction sur $C_v(X,\overline L)$, est
diff\'erentiable en $\overline L$ pour les directions
dans $C_v(X)$. R\'eciproquement, pour que la suite
$\underline{x}$ satisfasse \`a la condition
d'\'equidistribution, il suffit qu'il existe un
$\overline L\in\widehat{\Pic}(X)$ tel que
$(h_{\overline{L}}(x_n))_{n\geqslant 1}$ converge dans
$\mathbb R$ et que la fonction
$\varphi_{\underline{x}}$ soit diff\'erentiable en
$\overline L$ pour les directions dans $C_v( X)$.
\end{theo}
\begin{proof}
``$\Longrightarrow$''~: Pour tout entier $m\geqslant 1$
et tout $\overline M\in C_v(X)$, on a
\begin{equation}\label{Equ:additivite de h}\varphi_{\underline x}(\overline
L^{\otimes m}\otimes
M)=\liminf_{n\rightarrow\infty}\big(mh_{\overline
L }(x_n)+h_{\overline M}(x_n)\big)
=m\varphi_{\underline x}(\overline
L)+\varphi_{\underline{x}}(\overline M),
\end{equation} o\`u on a utilis\'e l'hypoth\`ese de
convergence de $(h_{\overline
M}(x_n))_{n\geqslant 1}$. Donc $D_{\overline
L}\varphi_{\underline x}=\varphi_{\underline
x}$ est additive.

``$\Longleftarrow$''~: Pour tout entier $m\geqslant 1$
et tout $\overline M\in C_v(X)$, on a encore
\eqref{Equ:additivite de h}, mais cette fois-ci, on
utilise la convergence de $(h_{\overline
L}(x_n))_{n\geqslant 1}$. Par cons\'equent,
$D_{\overline L}\varphi_{\underline x}$ s'identifie \`a
la restriction de $\varphi_{\underline x}$ \`a
$C_v(X)$. On en d\'eduit que $\varphi_{\underline{x}}$
est additive en $C_v(X)$. D'apr\`es la proposition
\ref{Pro:equidistribution equivalent a ladditive}, la
suite $\underline{x}$ satisfait \`a la condition
d'\'equidistribution.
\end{proof}

\section{Minorations de la limite inf\'erieure des hauteurs}

On a interpr\'et\'e la condition d'\'equidistribution
par la diff\'erentiabilit\'e de la fonction d\'efinie
par la limite inf\'erieure d'hauteurs (le th\'eor\`eme
\ref{Thm:crietere equidistribution}). D'apr\`es la
proposition \ref{Pro:critere de differentiabilite},
cette diff\'erentiabilit\'e peut \^etre justifi\'ee par
minorer la fonction limite inf\'erieure par des
fonctions diff\'erentiables. Dans ce paragraphe, on
consid\`ere le cas o\`u la suite de points alg\'ebrique
est {\it g\'en\'erique}, autrement dit, toute
sous-vari\'et\'e ferm\'ee autre que la vari\'et\'e
totale ne contient qu'un nombre fini de points de la
suite.

\subsection{Rappels sur l'in\'egalit\'e de
pentes}\label{SubSec:rappel}

Dans ce sous-paragraphe, on rappelle quelques notions
et r\'esultats dans la th\'eorie des pentes en
g\'eom\'etrie d'Arakelov due \`a Bost. Les
r\'ef\'erences sont
\cite{BostBour96,Bost2001,Chambert,BostICM,Gaudron07}.

On appelle {\it fibr\'e vectoriel ad\'elique} sur
$\Spec K$ toute donn\'ee
$(E,(\|\cdot\|_v)_{v\in\Sigma})$, o\`u $E$ est un
espace vectoriel de rang fini sur $K$ et $\|\cdot\|_v$
est une norme sur l'espace $E\otimes_K{\mathbb C_v}$,
soumise aux conditions suivantes~:
\begin{enumerate}[1)]
\item il existe un $\mathcal O_K$-module projectif $\mathcal
E$ tel que $\mathcal E_K=E$, et que, pour tout
$\mathfrak p\in\Sigma_f$, la norme
$\|\cdot\|_{\mathfrak p}$ soit induite par la structure
de $\mathcal O_K$-module sur $\mathcal E$.
\item les normes
$(\|\cdot\|_\sigma)_{\sigma\in\Sigma_\infty}$ sont
invariantes par la conjugaison complexe.
\end{enumerate}

\'Etant donn\'es deux fibr\'es vectoriels hermitiens
$\overline E$ sur $\Spec K$, la {\it caract\'eristique
d'Euler-Poincar\'e} de $\overline E$ est par
d\'efinition
\[\chi(\overline E):=\log(\mathrm{vol}(\mathbb B(\overline E)))-
\log(\mathrm{covol}(E)),\] o\`u $\mathrm{vol}$ est une
mesure de Haar quelconque sur $E\otimes_K\mathbb
A_{K}$, et $\mathrm{covol}$ est le covolume pour la
mesure $\mathrm{vol}$ du r\'eseau $E$ dans
$E\otimes_K\mathbb A_{K}$, $\mathbb A_K$ \'etant
l'anneau des ad\`eles de $K$.

Si $E$ est non-nul, le {\it degr\'e d'Arakelov} de
$\overline E$ est
\[\widehat{\deg}(\overline E):=\chi(\overline E)-\chi(\overline K^{\rang E}),\]
o\`u $\overline K$ est le fibr\'e inversible ad\'elique
trivial sur $\Spec K$, et pour tout entier $n\geqslant
1$, $\overline K^n$ d\'esigne la somme directe
orthogonale de $n$ copies de $\overline K$. En
particulier, $\widehat{\deg}(\overline K^n)=0$ pour
tout $n$. Par convention, le degr\'e d'Arakelov du
fibr\'e vectoriel ad\'elique nul est z\'ero. Si
$\overline E$ est un fibr\'e ad\'elique hermitien
non-nul sur $\Spec K$, on appelle {\it pente} de
$\overline E$ le nombre r\'eel
\[\widehat{\mu}(\overline E):=\frac{1}{[K:\mathbb Q]}\frac{
\widehat{\deg}(\overline E)}{\rang E}.\] La {\it pente
maximale} de $\overline E$ est par d\'efinition la
valeur maximale des pentes des sous-fibr\'es
ad\'eliques hermitiens de $\overline E$, not\'e
$\widehat{\mu}_{\max}(\overline E)$. On d\'efinit
$\widehat{\mu}_{\max}(0)=-\infty$ par convention.
Soient $\overline E$ et $\overline F$ deux fibr\'es
ad\'eliques hermitiens sur $\Spec K$, et
$f:E\rightarrow F$ un homomorphisme. Pour toute
$v\in\Sigma$, on d\'esigne par $h_v(f)$ le logarithme
de la norme (d'op\'erateur) de l'application
$f_{\mathbb C_v}:E\otimes_K\mathbb C_v\rightarrow
F\otimes_K\mathbb C_v$. On note en outre
\[h(f)=\frac{1}{[K:\mathbb Q]}\sum_{v\in\Sigma}
n_vh_v(f).\]

L'in\'egalit\'e de pente suivante, qui relie les pentes
maximales de la source et du but d'un homomorphisme
injectif de fibr\'es vectoriels ad\'eliques, sera
utilis\'ee plus loin dans la minoration des invariants
arithm\'etiques.

\begin{prop}
Soient $\overline E$ et $\overline F$ deux fibr\'es
vectoriels ad\'eliques non-nuls, $f:E\rightarrow F$ une
application $K$-lineaire injective. Alors on a
l'in\'egalit\'e suivante~:
\begin{equation}\label{Equ:pente}
\widehat{\mu}_{\max}(\overline E)\leqslant
\widehat{\mu}_{\max}(\overline F)+h(f).
\end{equation}
\end{prop}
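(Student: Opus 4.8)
The plan is to reduce the slope inequality \eqref{Equ:pente} to the classical statement over a number field by a standard dévissage on the source. First I would treat the case where $\overline E$ is \emph{semistable}, i.e.\ $\widehat\mu_{\max}(\overline E)=\widehat\mu(\overline E)$. In that situation one wants $\widehat\mu(\overline E)\leqslant\widehat\mu_{\max}(\overline F)+h(f)$, and the natural route is to compare $\widehat\mu(\overline E)$ with $\widehat\mu(\overline G)$, where $\overline G$ is the adelic vector bundle whose underlying $K$-space is $G:=f(E)\subseteq F$ equipped with the restricted adelic norms. Since $f:E\to G$ is an isomorphism of $K$-vector spaces, the local operator norms of $f$ and of $f^{-1}$ govern the comparison of degrees: concretely $\widehat{\deg}(\overline E)\leqslant\widehat{\deg}(\overline G)+(\rang E)\,[K:\mathbb Q]\,h(f)$, because at each place $v$ the distortion of the norm by $f_{\mathbb C_v}$ is bounded by $h_v(f)$ on each of the $\rang E$ coordinates of a suitable basis (equivalently, by multiplicativity of determinants, the $v$-adic volume distortion is at most $(\rang E)\,h_v(f)$). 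Dividing by $[K:\mathbb Q]\rang E$ gives $\widehat\mu(\overline E)\leqslant\widehat\mu(\overline G)+h(f)$. Finally $\overline G$ is a sub-adelic-bundle of $\overline F$, so $\widehat\mu(\overline G)\leqslant\widehat\mu_{\max}(\overline F)$, and the semistable case follows.

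For the general case I would invoke the existence of the Harder--Narasimhan filtration for adelic vector bundles over $\Spec K$ (this is part of the slope formalism of Bost, cf.\ the references \cite{BostBour96,Bost2001,BostICM,Gaudron07} cited just above). Let $\overline E_1\subseteq\overline E$ be the maximal destabilising subbundle, so that $\widehat\mu(\overline E_1)=\widehat\mu_{\max}(\overline E)$ and $\overline E_1$ is semistable. The restriction $f|_{E_1}:E_1\to F$ is again $K$-linear and injective, and its local operator norms are bounded above by those of $f$, hence $h(f|_{E_1})\leqslant h(f)$. Applying the semistable case already proved to $f|_{E_1}$ yields
\[\widehat\mu_{\max}(\overline E)=\widehat\mu(\overline E_1)\leqslant\widehat\mu_{\max}(\overline F)+h(f|_{E_1})\leqslant\widehat\mu_{\max}(\overline F)+h(f),\]
which is exactly \eqref{Equ:pente}.

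I expect the only genuinely delicate point to be the bookkeeping in the semistable step: one must be careful that the inequality $\widehat{\deg}(\overline E)\leqslant\widehat{\deg}(\overline G)+(\rang E)[K:\mathbb Q]h(f)$ holds with the correct normalisation at both archimedean and non-archimedean places, and that the definition of $h_v$ as the logarithm of the \emph{operator} norm (not, say, of a determinant norm) is reconciled with the factor $\rang E$. This is handled by choosing, at each place $v$, an orthonormal (resp.\ adapted) basis of $E\otimes_K\mathbb C_v$ and estimating the covolume change coordinate by coordinate; alternatively one passes through the $\rang E$-th exterior power, where $\widehat\mu(\det\overline E)=\widehat\mu(\overline E)$ and $h_v(\wedge^{\rang E}f)\leqslant(\rang E)h_v(f)$, reducing to the rank-one case which is immediate. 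The Harder--Narasimhan dévissage and the monotonicity $h(f|_{E_1})\leqslant h(f)$ are formal and present no obstacle.
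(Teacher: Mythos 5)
Your argument is correct. Note, however, that the paper does not give a proof of this proposition at all: it simply refers to \cite[Lemme~6.4]{Gaudron07}. Your sketch is essentially the standard proof that one would find there, and it is sound; in particular the key computation is cleanest via the determinant/exterior-power route you mention as an ``alternative'': since $\widehat{\deg}(\overline E)=\widehat{\deg}(\det\overline E)$ and, for $g=\Lambda^{\rang E}f:\det E\to\det G$, the rank-one degree comparison gives $\widehat{\deg}(\det\overline E)\leqslant\widehat{\deg}(\det\overline G)+[K:\mathbb Q]\,h(g)$ with $h(g)\leqslant(\rang E)\,h(f)$, one gets $\widehat\mu(\overline E)\leqslant\widehat\mu(\overline G)+h(f)$ directly, and then $\widehat\mu(\overline G)\leqslant\widehat\mu_{\max}(\overline F)$ because $\overline G=f(E)$ with restricted norms is a sub-adelic bundle of $\overline F$.

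One small structural remark: the Harder--Narasimhan dévissage is more machinery than you actually need, and your ``semistable case'' never uses semistability. The inequality $\widehat\mu(\overline E')\leqslant\widehat\mu_{\max}(\overline F)+h(f|_{E'})\leqslant\widehat\mu_{\max}(\overline F)+h(f)$ established by your argument holds verbatim for \emph{every} nonzero sub-adelic bundle $\overline E'\subseteq\overline E$ (injectivity of $f|_{E'}$ and $h_v(f|_{E'})\leqslant h_v(f)$ are automatic), and taking the supremum over $\overline E'$ gives $\widehat\mu_{\max}(\overline E)\leqslant\widehat\mu_{\max}(\overline F)+h(f)$ without invoking HN at all. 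Your version is not wrong, merely roundabout. The only other thing to keep in mind is that $h(f)$ is indeed a finite sum because $h_v(f)=0$ for all but finitely many $v$ by the model condition in the definition of adelic vector bundles, but you use this implicitly and it is standard.
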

\begin{proof}
Voir \cite[Lemme 6.4]{Gaudron07} pour la
d\'emonstration.
\end{proof}

\subsection{Invariants asymptotiques des fibr\'es inversibles hermtiens}
Soient $\pi: X\rightarrow\Spec K$ un $K$-sch\'ema
projectif et int\`egre. Des invariants arithm\'etique
sont naturellement d\'efinis pour les fibr\'es
inversibles hermitiens sur $X$. Dans la suite, on
pr\'esente des constructions classiques et introduit
quelques notions.

\subsubsection*{Minimum essentiel}Soit $\overline{L}$ un fibr\'e inversible
hermitien sur $X$. Le {\it minimum essentiel} de
$\overline{ L}$, c'est-\`a-dire le {premier minimum}
(logarithmique) de $\overline{L}$, est par d\'efinition
\begin{equation}
\widehat{\mu}_{\mathrm{ess}}(\overline{
L }):=\sup_{\begin{subarray}{c}U\subsetneq X\\
U\text{ ouvert}\end{subarray}}\inf_{x\in U(\overline
K)}h_{\overline{ L}}(x).
\end{equation}

\begin{lemm}\label{Lem:comparaison de mu ess}
Soient $\overline L_1$ et $\overline{L}_2$ deux
fibr\'es inversibles hermitiens sur $X$. Si $\varphi$
est un homomorhpisme non-nul de $L_1$ vers $ L_2$,
alors
\[\widehat{\mu}_{\mathrm{ess}}(\overline L_1)
\leqslant\widehat{\mu}_{\mathrm{ess}}(\overline L_2
)+h(\varphi).\]
\end{lemm}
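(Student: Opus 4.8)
The plan is to mimic the structure of Lemma~\ref{Lem:comparaison de mu ess}'s statement: we want to bound the essential minimum of $\overline L_1$ in terms of that of $\overline L_2$ using the nonzero homomorphism $\varphi : L_1 \to L_2$. The key observation is that for a point $x \in X(\overline K)$ lying \emph{outside} the vanishing locus of $\varphi$, the homomorphism $\varphi$ pulls back to a nonzero homomorphism $x^*\varphi : x^*L_1 \to x^*L_2$ of adelic line bundles on $\Spec K(x)$, and the local height $h_v(\varphi)$ of the original homomorphism dominates the corresponding local quantity for $x^*\varphi$ at every place. Concretely, since $x^*L_i$ has fibre $L_{i,v,x}$ at a place $v$ of $K(x)$, the induced map $(x^*\varphi)_v$ is just $\varphi_x$ up to the identification, so its operator norm is bounded by $\sup_{y} \|\varphi_y\|_v = e^{h_v(\varphi)}$. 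Summing over places with the normalized weights gives $h(x^*\varphi) \le h(\varphi)$, hence by the additivity of the Arakelov degree (or directly by comparing norms of a generating section) one gets $h_{\overline L_1}(x) = \widehat{\deg}_n(x^*\overline L_1) \le \widehat{\deg}_n(x^*\overline L_2) + h(\varphi) = h_{\overline L_2}(x) + h(\varphi)$.

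Next I would organize the supremum-infimum. Let $Y \subsetneq X$ be the closed subscheme where $\varphi$ vanishes (it is proper since $\varphi \ne 0$ and $X$ is integral). For any nonempty open $U \subsetneq X$, the open set $U' := U \setminus Y$ is again nonempty (again because $X$ is integral, so $U$ is dense and $Y$ is a proper closed subset, so $U \not\subseteq Y$) and contained in $U$. For every $x \in U'(\overline K)$ we have the pointwise inequality $h_{\overline L_1}(x) \le h_{\overline L_2}(x) + h(\varphi)$ just established, hence $\inf_{x \in U'(\overline K)} h_{\overline L_1}(x) \le \inf_{x \in U'(\overline K)} h_{\overline L_2}(x) + h(\varphi) \le \widehat{\mu}_{\mathrm{ess}}(\overline L_2) + h(\varphi)$. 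On the other hand, since $U' \subseteq U$ we have $\inf_{x \in U(\overline K)} h_{\overline L_1}(x) \le \inf_{x \in U'(\overline K)} h_{\overline L_1}(x)$. Combining and then taking the supremum over all open $U \subsetneq X$ yields $\widehat{\mu}_{\mathrm{ess}}(\overline L_1) \le \widehat{\mu}_{\mathrm{ess}}(\overline L_2) + h(\varphi)$, which is exactly the claim.

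The step I expect to require the most care is the pointwise height comparison, i.e.\ checking that $h_{\overline L_1}(x) \le h_{\overline L_2}(x) + h(\varphi)$ for $x$ outside the zero locus of $\varphi$. The cleanest way is to write $h_{\overline L_i}(x) = \widehat{\deg}_n(x^*\overline L_i)$ and use that a nonzero homomorphism $\psi : \overline M_1 \to \overline M_2$ of adelic line bundles on $\Spec K(x)$ satisfies $\widehat{\deg}_n(\overline M_1) \le \widehat{\deg}_n(\overline M_2) + h(\psi)$ — this is the rank-one case of the slope inequality \eqref{Equ:pente}, or can be seen directly: pick a generating section $s$ of $M_1$, then $\psi(s)$ generates $M_2$ (as $\psi$ is an isomorphism on the one-dimensional fibres over the field $K(x)$), and $\widehat{\deg}_n(\overline M_1) = -\frac{1}{[K(x):\mathbb Q]}\sum_{w} n_w \log\|s\|_w$ while $\widehat{\deg}_n(\overline M_2) = -\frac{1}{[K(x):\mathbb Q]}\sum_w n_w \log\|\psi(s)\|_w$, and $\log\|\psi(s)\|_w = \log\|s\|_w + \log\|\psi_w\|$; summing and bounding $\log\|\psi_w\| \le h_w(\varphi)$ gives the inequality. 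One must also verify that the normalization conventions (the factors $n_v$, $n_\sigma=1$, and the division by $[K:\mathbb Q]$ versus $[K(x):\mathbb Q]$) match up, but this is routine bookkeeping that parallels the additivity property of heights already recorded in the introduction.
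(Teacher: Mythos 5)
Your proposal is correct and follows essentially the same route as the paper: establish the pointwise bound $h_{\overline L_1}(x)\leqslant h_{\overline L_2}(x)+h(\varphi)$ (via the pullback $x^*\varphi$ of line bundles on $\Spec K(x)$ and the rank-one slope inequality) and then pass to the sup-inf defining $\widehat{\mu}_{\mathrm{ess}}$. Your write-up is in fact more careful than the paper's one-line proof, which asserts the pointwise inequality for ``tout point alg\'ebrique'' without noting that one must restrict to $x$ outside the zero locus $Y$ of $\varphi$ and replace $U$ by the still-dense open $U\setminus Y$ before taking the infimum.
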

\begin{proof}
Pour tout point alg\'ebrique $x\in X(\overline K)$, on
a
\[h_{\overline L_1}(x)=h_{\overline L_2}(x)+\sum_{v\in\Sigma}
n_v\log\|f_x\|_v\leqslant h_{\overline L_2}(x)+h(f).\]
\end{proof}
\begin{prop}
Pour tout fibr\'e inversible ad\'elique $\overline{L}$
sur $X$, $\widehat{\mu}_{\mathrm{ess}}(\overline{L
})<+\infty$.
\end{prop}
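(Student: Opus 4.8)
Le plan est de donner une d\'emonstration directe, reposant uniquement sur la densit\'e Zariski des points alg\'ebriques de $X$ et sur la noeth\'erianit\'e de l'espace topologique sous-jacent \`a $X$~; elle n'exige aucune hypoth\`ese de positivit\'e et n'utilise pas l'in\'egalit\'e de pentes. Pour tout entier $m\geq 1$ on posera
\[\Omega_m:=\{x\in X(\overline K)\mid h_{\overline L}(x)\leq m\},\]
et on notera $Y_m\subseteq X$ l'adh\'erence de Zariski de l'image de $\Omega_m$.

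La seule v\'erification non formelle sera que $h_{\overline L}(x)=\widehat{\deg}_n(x^*\overline L)$ est un nombre r\'eel fini pour \emph{tout} point $x\in X(\overline K)$. En effet, $x^*\overline L$ est un fibr\'e inversible ad\'elique sur $\Spec K(x)$, o\`u $K(x)$ est un corps de nombres~; comme les m\'etriques de $\overline L$ proviennent g\'en\'eriquement d'un mod\`ele $(\mathscr X,\mathscr L)$, le point $x$ s'\'etend en une section de ce mod\`ele au-dessus d'un ouvert de $\Spec\mathcal O_{K(x)}$, et en prenant pour $s_0$ un g\'en\'erateur du $\mathcal O_{K(x)}$-module inversible obtenu par image r\'eciproque, on a $\|s_0\|_w=1$ pour presque toute place $w$ de $K(x)$. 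Ainsi $\widehat{\deg}(x^*\overline L)=-\sum_w n_w\log\|s_0\|_w$ est une somme finie de termes finis, donc $h_{\overline L}(x)\in\mathbb R$. En particulier $\bigcup_{m\geq 1}\Omega_m=X(\overline K)$.

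On observera ensuite que $(Y_m)_{m\geq 1}$ est une suite croissante de ferm\'es de $X$~; par noeth\'erianit\'e elle est stationnaire, donc il existe un entier $m_0$ tel que $Y_m=Y_{m_0}$ pour tout $m\geq m_0$. Le ferm\'e $Y_{m_0}=\bigcup_{m\geq 1}Y_m$ contient alors $\bigcup_{m\geq 1}\Omega_m=X(\overline K)$, lequel est Zariski-dense dans $X$ (le sch\'ema $X$ \'etant de type fini sur un corps, ses points ferm\'es sont denses, et ils sont tous de corps r\'esiduel fini sur $K$). On en d\'eduit $Y_{m_0}=X$, autrement dit $\Omega_{m_0}$ est Zariski-dense.

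On conclura aussit\^ot~: pour tout ouvert non vide $U\subsetneq X$, l'ouvert $U$ rencontre l'ensemble dense $\Omega_{m_0}$, donc il existe $x\in U(\overline K)$ tel que $h_{\overline L}(x)\leq m_0$~; par suite $\inf_{x\in U(\overline K)}h_{\overline L}(x)\leq m_0$, et en prenant la borne sup\'erieure sur $U$ on obtient $\widehat{\mu}_{\mathrm{ess}}(\overline L)\leq m_0<+\infty$. L'argument est essentiellement topologique, et l'unique point d\'elicat est la finitude de $h_{\overline L}$ en chaque point alg\'ebrique, cons\'equence imm\'ediate de la condition de mod\`ele figurant dans la d\'efinition d'un fibr\'e inversible ad\'elique. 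On pourrait aussi, via le lemme \ref{Lem:comparaison de mu ess}, se ramener au cas d'un fibr\'e tr\`es ample puis d'un plongement dans un espace projectif, mais l'approche ci-dessus est plus directe et n'exige aucune construction auxiliaire.
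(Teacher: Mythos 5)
Le pas noeth\'erien de votre proposition est faux. Vous affirmez que la suite croissante $(Y_m)_{m\geq 1}$ de ferm\'es de Zariski de $X$ est stationnaire ``par noeth\'erianit\'e''. Or la noeth\'erianit\'e de l'espace topologique sous-jacent \`a $X$ fournit la condition de cha\^{\i}ne \emph{descendante} sur les ferm\'es (de fa\c{c}on \'equivalente, ascendante sur les ouverts), et non la condition de cha\^{\i}ne ascendante sur les ferm\'es. Une suite croissante de ferm\'es d'un espace noeth\'erien n'a donc aucune raison de stabiliser~: dans $\mathbb{P}^1_K$, par exemple, la cha\^{\i}ne $\{y_1\}\subsetneq\{y_1,y_2\}\subsetneq\{y_1,y_2,y_3\}\subsetneq\cdots$, o\`u les $y_i$ sont des points ferm\'es deux \`a deux distincts, est une cha\^{\i}ne strictement croissante infinie de ferm\'es.

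Cette lacune n'est pas r\'eparable par des moyens purement topologiques, car l'\'enonc\'e interm\'ediaire que vous visez --- l'existence d'une tranche $\Omega_{m_0}$ Zariski-dense --- n'est pas une cons\'equence formelle du seul fait que $h_{\overline L}$ prend des valeurs r\'eelles finies. Pour une fonction r\'eelle arbitraire sur l'ensemble (d\'enombrable) des points ferm\'es de $X$, il est ais\'e d'arranger que chaque tranche $\{h\leq m\}$ soit finie (poser $h(y_i)=i$ pour une \'enum\'eration des points ferm\'es)~: toutes les valeurs sont finies, aucune tranche n'est dense, et le ``minimum essentiel'' de $h$ vaut $+\infty$. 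C'est donc bien la structure arithm\'etique des hauteurs ad\'eliques qui rend la proposition vraie, et c'est ce qu'exploite la preuve de l'article~: on compare $\overline{L}$ \`a un fibr\'e ad\'elique ample $\overline{\mathscr{L}}$ tel que $\overline{L}^\vee\otimes\overline{\mathscr{L}}$ soit effectif, via le lemme \ref{Lem:comparaison de mu ess}, puis on invoque le th\'eor\`eme de Zhang \cite{Zhang95} pour la finitude de $\widehat{\mu}_{\mathrm{ess}}(\overline{\mathscr{L}})$; c'est l\`a que r\'eside tout le contenu arithm\'etique. Les autres \'etapes de votre proposition (finitude de $h_{\overline{L}}(x)$ pour chaque $x\in X(\overline K)$, densit\'e des points ferm\'es, conclusion depuis une \'eventuelle densit\'e de $\Omega_{m_0}$) sont correctes en soi, mais le maillon noeth\'erien qui les relie est cass\'e.
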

\begin{proof}
Soit $\overline{\mathscr L}$ un fibr\'e inversible
ad\'elique ample tel que $\overline
L^\vee\otimes\overline{\mathscr L}$ soit effectif.
D'apr\`es le lemme \ref{Lem:comparaison de mu ess}, on
a $\widehat{\mu}_{\mathrm{ess}}(\overline
L)\leqslant\widehat{\mu}_{\mathrm{ess}}(\overline{\mathscr
L })$. La finitude de
$\widehat{\mu}_{\mathrm{ess}}(\overline L)$ provient
donc de celle de
$\widehat{\mu}_{\mathrm{ess}}(\overline{\mathscr L})$,
\'etablie dans \cite{Zhang95}.
\end{proof}

\subsubsection*{Pente maximale asymptotique} Soit $\overline
L$ un fibr\'e inversible ad\'elique tel que $L$ est
gros (c'est-\`a-dire que la dimension d'Iikata de $X$
relativement \`a $L$ est maximale, voir \cite[\S
2.2]{LazarsfeldI}). La {\it pente maximale
asymptotique} de $\overline L$ introduite dans
\cite{Chen08} est un invariant arithm\'etique qui
contr\^ole le comportement asymptotique de la norme de
la plus petite section effective non-nulle de
$\overline L^{\otimes n}$ lorsque $n$ tend vers
l'infinie. Elle est d\'efinie comme
\[\widehat{\mu}^\pi_{\max}(\overline L):=\lim_{n\rightarrow+\infty}
\frac 1n\widehat{\mu}_{\max}(\overline L^{\otimes
n}),\] o\`u l'existence de la limite est justifi\'ee
dans \cite[th\'eor\`eme 4.1.8]{Chen08}.

\subsubsection*{Capacit\'e sectionnelle} La capacit\'e
sectionnelle d'un fibr\'e inversible ad\'elique
$\overline L$ sur $X$ est par d\'efinition
\begin{equation}\label{Equ:capacite sectionnelle}S(\overline L):=\lim_{n\rightarrow+\infty}
\frac{\chi(\pi_*\overline L^{\otimes
n})}{n^{d+1}/(d+1)!}\in[-\infty,+\infty[,\end{equation}
o\`u $d$ est la dimension de $X$. C'est une notion qui
g\'en\'eralise le nombre d'auto-intersection
arithmetique~: si $L$ est ample et si toutes les
m\'etriques $\|\cdot\|_v$ sont semi-positive, alors
$S(\overline L)$ est finie et \'egale \`a
$\widehat{c}_1(\overline L)^{d+1}$. C'est une
cons\'equence du th\'eor\`eme de Hilbert-Samuel
d\'emontr\'e par
\cite{Gillet-Soule,Abbes-Bouche,Zhang95,Autissier01,Randriam06}
dans diverses situations. L'existence de la limite
\eqref{Equ:capacite sectionnelle} est d\'emontr\'ee par
Rumely, Lau et Varley \cite{Rumely_Lau_Varley} dans le
cas o\`u $L$ ample, et puis par le pr\'esent auteur
\cite{Chen_bigness} au cas g\'en\'eral. Il s'av\`ere
que la capacit\'e sectionnelle est toujours nulle
lorsque $L$ n'est pas gros, car on a
$\chi(\pi_*\overline L^{\otimes
n})=O(n^{\kappa(X,L)+1})$, o\`u $\kappa(X,L)$ est la
dimension d'Iitaka de $X$ relativement \`a $L$ (voir
\cite[corollaire 2.1.38]{LazarsfeldI}). Si $L$ est
gros, on note
\begin{equation}\widehat{\mu}^\pi(\overline
L):=\frac{S(\overline L)}{ [K:\mathbb
Q](d+1)\mathrm{vol}(L)},\end{equation} o\`u le {\it
volume} de $L$ est d\'efini comme la limite
\begin{equation*}\mathrm{vol}(L):=\lim_{n\rightarrow+\infty}\frac{\rang_KH^0(X,L^{\otimes
n})}{ n^d/d!}.\end{equation*} On appelle
$\widehat{\mu}^\pi(\overline L)$  la {\it pente
asymptotique} de $\overline L$.

\subsubsection*{Volume arithm\'etique et pente postive asymptotique}
Le volume arithm\'etique d'un fibr\'e inversible
ad\'elique $\overline L$ sur $X$ est d\'efini dans
\cite{Moriwaki07} comme
\begin{equation}\label{Equ:volume de moriwaki}\widehat{\mathrm{vol}}(\overline L)=\limsup_{n\rightarrow\infty}
\frac{\widehat{h}^0(\overline L^{\otimes
n})}{n^{d+1}/(d+1)!},\end{equation} o\`u
\[\widehat{h}^0(\overline L^{\otimes n}):=\log\#
\Big\{s\in H^0(X,L^{\otimes n})\;\Big|\;\forall\,
v\in\Sigma,\,\|s\|_{v,\sup}\leqslant 1\Big\}.\] Il
s'agit en fait d'une limite dans \eqref{Equ:volume de
moriwaki}, voir \cite{Chen_bigness}. Si $L$ est gros,
on note
\[\widehat{\mu}_+^\pi(\overline L):=\frac{\widehat{\mathrm{vol}}(\overline L)}{
[K:\mathbb Q](d+1)\mathrm{vol}(L)}.\] On appelle
$\widehat{\mu}_+^\pi(\overline L)$ la {\it pente
positive asymptotique} de $\overline L$.

\subsubsection*{Mesure asymptotique} Soit $\overline L$
un fibr\'e inversible ad\'elique sur $X$ tel que $L$
est gros. La mesure asymptotique de $\overline L$,
introduite dans \cite{Chen08} et \cite{Chen_bigness},
est un invariant tr\`es g\'en\'eral qui permet de
retrouver divers invariants arithm\'etriques de
$\overline L$ par int\'egration. Soit $n\geqslant 1$ un
entier. L'espace vectoriel $E_n:=\pi_*(L^{\otimes n})$
est filtr\'e par ses minima successifs. Pour tout
$t\in\mathbb R$, on note
\[\mathcal F_tE_n:=\mathrm{Vect}_K(\{s\in E_n\mid\forall\, \mathfrak p\in\Sigma_f,\,
\|s\|_{\mathfrak p,\sup}\leqslant
1,\,\forall\,\sigma\in\Sigma_\infty,\,\|s\|_{\sigma,\sup}\leqslant
e^{-nt} \}).\] La suite de mesure de probabilit\'e
$\big(-\frac{\mathrm{d}}{\mathrm{d}t}\rang(\mathcal
F_tE_n)/\rang(E_n)\big)_{n\geqslant 1}$ converge
vaguement vers une mesure de probabilit\'e bor\'elienne
$\nu_{\overline L}$ sur $\mathbb R$ (voir
\cite{Chen_Fujita} corollaire 3.13 et remarque
3.14), les d\'eriv\'ees \'etant prises au sens de
distribution. On l'appelle la {\it mesure asymptotique}
de $\overline L $. Cette mesure peut aussi \^etre
construite par la filtration de Harder-Narasimhan. Plus
pr\'ecis\'ement, si on note $\mathcal G$ la filtration
de $E_n$ telle que
\[\forall\,t\in\mathbb R,\quad\mathcal G_tE_n=\sum_{\begin{subarray}{c}
F\subset E_n\\
\widehat{\mu}_{\min}(\overline F)\geqslant nt
\end{subarray}}F,\]
alors $\nu_{\overline L}$ est aussi la limite vague de
la suite de mesures
$\big(-\frac{\mathrm{d}}{\mathrm{d}t}\rang(\mathcal
G_tE_n)/\rang(E_n)\big)_{n\geqslant 1}$, voir \cite[\S
3]{Chen_Fujita} pour le d\'etail.

Comme $\mathcal G_tE_n=0$ d\`es que $t\geqslant
n^{-1}\widehat{\mu}_{\max}(\pi_*(\overline L^{\otimes n
}))$, on obtient que le support de la mesure limite
$\nu_{\overline L}$ est contenu dans l'intervalle
ferm\'e $]-\infty,\widehat{\mu}_{\max}^\pi(\overline
L)]$.

Plusieurs invariants arithm\'etiques de $\overline L$
peuvent \^etre repr\'esent\'es comme des int\'egrales
par rapport \`a $\nu_{\overline L}$. On a (cf.
\cite{Chen_bigness})
\begin{equation}\widehat{\mu}^\pi(\overline L)=\int_{\mathbb
R}x\,\nu_{\overline L}
(\mathrm{d}x),\qquad\widehat{\mu}_+^\pi(\overline L)=
\int_{\mathbb R}x_+\,\nu_{\overline
L}(\mathrm{d}x),\end{equation} o\`u $x_+=\max(x,0)$.

\subsection{Comparaisons des invariants arithm\'etiques}
Le but de ce sous-paragraphe est d'\'etablir les
comparaisons comme ci-dessous.

\begin{prop}\label{Pro:comparaison}
Soit $\overline L$ un fibr\'e inversible ad\'elique sur
$X$ tel que $L$ soit gros. Les in\'egalit\'es suivantes
sont vraies~:
\begin{equation}\label{Equ:inegalities}
\widehat{\mu}_{\mathrm{ess}}(\overline
L)\geqslant\widehat{\mu}_{\max}^\pi(\overline
L)\geqslant\widehat{\mu}^\pi(\overline L).
\end{equation}
Si de plus $\overline L$ est gros, ou de fa\c{c}on \'equivalente, $\widehat{\mu}_{\max}^\pi(\overline L)>0$, alors 
\begin{equation}\label{Equ:inegalities2}
\widehat{\mu}_{\max}^\pi(\overline
L)\geqslant\widehat{\mu}_{+}^\pi(\overline
L)\geqslant\widehat{\mu}^\pi(\overline L).
\end{equation}
\end{prop}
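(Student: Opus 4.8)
The plan is to prove the four inequalities separately. Only the first link $\widehat{\mu}_{\mathrm{ess}}(\overline L)\geqslant\widehat{\mu}_{\max}^\pi(\overline L)$ is substantial; the three others follow formally from the integral representations $\widehat{\mu}^\pi(\overline L)=\int_{\mathbb R}x\,\nu_{\overline L}(\mathrm{d}x)$ and $\widehat{\mu}_+^\pi(\overline L)=\int_{\mathbb R}x_+\,\nu_{\overline L}(\mathrm{d}x)$, together with the facts that $\nu_{\overline L}$ is a probability measure and that $\mathrm{supp}(\nu_{\overline L})\subseteq\,]-\infty,\widehat{\mu}_{\max}^\pi(\overline L)]$.

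For the first inequality I would use the slope inequality \eqref{Equ:pente} applied to evaluation maps. Fix $n$ large enough that $\overline E_n:=\pi_*(\overline L^{\otimes n})\neq 0$, and let $\overline F_n\subseteq\overline E_n$ be a maximal destabilising subbundle; it is semistable with $\widehat{\mu}(\overline F_n)=\widehat{\mu}_{\max}(\overline E_n)$. Since $F_n\subseteq H^0(X,L^{\otimes n})$ is non-zero and $X$ is integral, the base locus of the linear system $F_n$ is a proper closed subset of $X$; let $U_n$ be its open complement. For $x\in U_n(\overline K)$ some section of $F_n$ is non-zero at $x$, so the evaluation morphism $\mathrm{ev}_x\colon(F_n)_{K(x)}\to x^{*}L^{\otimes n}$ is surjective; equip its source with the sup-metrics of $\overline E_n$ base-changed to $K(x)$ and its target with the metrics of $x^{*}\overline L^{\otimes n}$, obtaining adelic bundles $\overline{F_{n,K(x)}}$ and $\overline{x^{*}L^{\otimes n}}$ over $K(x)$. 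Since $\|s(x)\|_{w}\leqslant\|s\|_{w,\sup}$ at each place $w$ of $K(x)$, we have $h(\mathrm{ev}_x)\leqslant 0$, hence $h(\mathrm{ev}_x^{\vee})\leqslant 0$. Applying \eqref{Equ:pente} to the injection $\mathrm{ev}_x^{\vee}\colon(x^{*}\overline L^{\otimes n})^{\vee}\hookrightarrow\overline{F_{n,K(x)}}^{\vee}$, and using $\widehat{\mu}_{\max}\big((x^{*}\overline L^{\otimes n})^{\vee}\big)=-n\,h_{\overline L}(x)$ while $\widehat{\mu}_{\max}\big(\overline{F_{n,K(x)}}^{\vee}\big)=-\widehat{\mu}_{\min}\big(\overline{F_{n,K(x)}}\big)=-\widehat{\mu}_{\max}(\overline E_n)$ — the last equality because $\overline F_n$, being semistable over $K$, remains so over $K(x)$ (all extensions being separable), while the normalised slope is invariant under field extension — we get $n\,h_{\overline L}(x)\geqslant\widehat{\mu}_{\max}(\overline E_n)$. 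Taking the infimum over $x\in U_n(\overline K)$ gives $\widehat{\mu}_{\mathrm{ess}}(\overline L)\geqslant\tfrac1n\widehat{\mu}_{\max}(\overline E_n)$, and letting $n\to\infty$ gives $\widehat{\mu}_{\mathrm{ess}}(\overline L)\geqslant\widehat{\mu}_{\max}^\pi(\overline L)$.

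For the remaining three inequalities, set $M:=\widehat{\mu}_{\max}^\pi(\overline L)$; it is a real number, since $\mathrm{supp}(\nu_{\overline L})$ is non-empty and $M\leqslant\widehat{\mu}_{\mathrm{ess}}(\overline L)<+\infty$. As $x\leqslant M$ holds $\nu_{\overline L}$-almost everywhere, $\widehat{\mu}^\pi(\overline L)=\int_{\mathbb R}x\,\nu_{\overline L}(\mathrm{d}x)\leqslant M$, which is the second inequality of \eqref{Equ:inegalities}. From $x_+\geqslant x$ one gets $\widehat{\mu}_+^\pi(\overline L)\geqslant\widehat{\mu}^\pi(\overline L)$ with no extra hypothesis, the right-hand inequality of \eqref{Equ:inegalities2}. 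Finally, if moreover $M>0$, then $x_+=\max(x,0)\leqslant\max(M,0)=M$ holds $\nu_{\overline L}$-almost everywhere, whence $\widehat{\mu}_+^\pi(\overline L)=\int_{\mathbb R}x_+\,\nu_{\overline L}(\mathrm{d}x)\leqslant M$, the left-hand inequality of \eqref{Equ:inegalities2}.

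I expect the only real obstacle to lie in the first inequality, and within it the fact that $\widehat{\mu}_{\min}$ is unchanged after extending the base number field (so that the value $\widehat{\mu}_{\max}(\overline E_n)$ computed over $K$ may be reinjected into the slope inequality over $K(x)$), together with the routine but not entirely trivial check that the sup-norms base-change correctly so that $h(\mathrm{ev}_x)\leqslant 0$ is legitimate. This can be bypassed by arguing with the minima filtration instead: for every $t<\widehat{\mu}_{\max}^\pi(\overline L)=\sup\mathrm{supp}(\nu_{\overline L})$ the construction of $\nu_{\overline L}$ forces $\mathcal F_tE_n\neq 0$ for all large $n$, and then any $s\in\mathcal F_tE_n$ non-vanishing at an algebraic point $x$ satisfies $n\,h_{\overline L}(x)=\widehat{\deg}_n(x^{*}\overline L^{\otimes n})\geqslant nt$ by the product formula (finite places contributing $\geqslant 0$, archimedean places $\geqslant nt$), so $\widehat{\mu}_{\mathrm{ess}}(\overline L)\geqslant t$ and hence $\widehat{\mu}_{\mathrm{ess}}(\overline L)\geqslant\widehat{\mu}_{\max}^\pi(\overline L)$ again.
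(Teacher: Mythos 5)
Your proof of the last three inequalities is identical to the paper's: both deduce them from the integral representations of $\widehat{\mu}^\pi$ and $\widehat{\mu}_+^\pi$ together with the inclusion $\mathrm{supp}(\nu_{\overline L})\subseteq\,]-\infty,\widehat{\mu}_{\max}^\pi(\overline L)]$. For the first inequality $\widehat{\mu}_{\mathrm{ess}}(\overline L)\geqslant\widehat{\mu}_{\max}^\pi(\overline L)$ you take a genuinely different route. The paper argues in the contrapositive: it fixes $t>\widehat{\mu}_{\mathrm{ess}}(\overline L)$, notes that the set $\mathcal B$ of points of height $\leqslant t$ is Zariski dense, and applies the slope inequality to the evaluation map $H^0(X,L^{\otimes n})\to\bigoplus_{P\in\mathcal B_n}P^*L^{\otimes n}$ for a finite sub-basis $\mathcal B_n$ (Lemma \ref{Lem:minoration de mu ess}), picking up a $\log\rang(\pi_*L^{\otimes n})$ error term that disappears after dividing by $n$ and letting $n\to\infty$. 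Your primary argument instead bounds $h_{\overline L}(x)$ from below \emph{pointwise} on the complement of the base locus of the maximal destabilising subbundle $F_n\subseteq\pi_*(L^{\otimes n})$, by applying the slope inequality to the dual of the single evaluation map $(F_n)_{K(x)}\to x^*L^{\otimes n}$. This avoids the rank term entirely but, as you note, leans on two nontrivial facts the paper does not need: the slope-duality identity $\widehat{\mu}_{\max}(\overline E^\vee)=-\widehat{\mu}_{\min}(\overline E)$, which is exact for Hermitian norms but requires care for the sup-norms on $\pi_*\overline L^{\otimes n}$, and the invariance of semistability (hence of $\widehat{\mu}_{\min}$) under base change from $K$ to $K(x)$. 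Both are in the literature (Bost, Gaudron), but they are heavier tools than what the paper invokes. Your fallback via the minima filtration $\mathcal F_t$ is actually closer in spirit to the lightest possible argument: it only uses the product formula for one small non-vanishing section, at the cost of using the equality $\widehat{\mu}_{\max}^\pi(\overline L)=\sup\mathrm{supp}(\nu_{\overline L})$ (true, asserted in the introduction of the paper, but the body of the paper only records the inclusion of the support and never needs the exact equality in this proof). All three routes are valid; the paper's has the advantage of requiring nothing beyond the stated slope inequality \eqref{Equ:pente}.
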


La deuxi\`eme in\'egalit\'e de \eqref{Equ:inegalities} et les in\'egalit\'es dans \eqref{Equ:inegalities2} proviennent
simplement de l'interpr\'etation des invariants
arithm\'etiques par la mesure asymptotique. La
d\'emonstration de la premi\`ere in\'egalit\'e fait
appel \`a la m\'ethode de pentes appliqu\'ee aux
applications d'\'evaluation, voir \cite{BostICM} pour
un survol et pour une liste compl\`ete de
r\'ef\'erences.

\begin{lemm}\label{Lem:minoration de mu ess}
Soit $\overline L$ un fibr\'e inversible ad\'elique sur $X$.
Si $\mathcal B\subset X(\overline K)$ est une famille de
points qui est Zariski dense dans $X(\overline K)$,
alors \[\sup\{h_{\overline L}(P)\;|\;P\in\mathcal
B\}\ge\frac 1n\widehat{\mu}_{\max}(\pi_*(\overline
L^{\otimes n}))\] quel que soit $n\in\mathbb N^*$. En
particulier, on a $\sup\{h_{\overline
L}(P)\;|\;P\in\mathcal
B\}\ge\widehat{\mu}_{\max}^\pi(\overline L)$ si $L$ est gros.
\end{lemm}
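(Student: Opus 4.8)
The idea is a direct application of the slope inequality \eqref{Equ:pente} to a well-chosen evaluation map, following the Bost slope method. Fix $n\in\mathbb N^*$ and abbreviate $E_n:=\pi_*(\overline L^{\otimes n})$, a nonzero adelic vector bundle on $\Spec K$ (nonzero because $L$ is big, hence $H^0(X,L^{\otimes n})\neq 0$ for $n$ large; for the finitely many remaining $n$ with $E_n=0$ the inequality is vacuous since $\widehat\mu_{\max}(0)=-\infty$). Let $\overline F\subseteq E_n$ be a nonzero adelic subbundle realizing the maximal slope, so $\widehat\mu(\overline F)=\widehat\mu_{\max}(E_n)$. The plan is to produce a point $P\in\mathcal B$ such that the evaluation map $\mathrm{ev}_P\colon F\to P^*\overline L^{\otimes n}$ (restriction to $X(\overline K)$ of sections in $F$, evaluated at $P$) is nonzero, and then to estimate.

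First I would argue that such a $P$ exists. Since $F$ is a nonzero subspace of $H^0(X,L^{\otimes n})$, the base locus of $F$ — the set of points of $X(\overline K)$ where every section in $F$ vanishes — is a proper closed subset of $X$; as $\mathcal B$ is Zariski dense in $X(\overline K)$, some $P\in\mathcal B$ lies outside this base locus, so $\mathrm{ev}_P|_F\neq 0$. The composite $F\hookrightarrow H^0(X,L^{\otimes n})\xrightarrow{\mathrm{ev}_P} P^*L^{\otimes n}$ is thus a nonzero $K$-linear map from an adelic bundle to the rank-one adelic bundle $P^*\overline L^{\otimes n}$ on $\Spec K(x)$; one replaces source and target by their base changes to $K(x)$ (which multiplies all normalized slopes by $1$, since everything is normalized by $[K:\mathbb Q]$) to make it a morphism of adelic bundles over the same base. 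Its image is a rank-one adelic subbundle of $P^*\overline L^{\otimes n}$, whose normalized Arakelov degree is at most that of $P^*\overline L^{\otimes n}$ itself, namely $n\,h_{\overline L}(P)$ by additivity and normalization of the height.

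Now I would apply the slope inequality \eqref{Equ:pente} to the injective map $\overline F/\ker\hookrightarrow P^*\overline L^{\otimes n}$ induced by $\mathrm{ev}_P$. The quotient $\overline F\to \overline F/\ker$ does not decrease the maximal slope, and the rank-one target has $\widehat\mu_{\max}=\widehat{\deg}_n\leqslant n\,h_{\overline L}(P)$; the height term $h(\mathrm{ev}_P)$ of the evaluation map is $\leqslant 0$ because for every place $v$ and every $x\in X_v^{\mathrm{an}}$ the operator $s\mapsto s(x)$ has norm $\leqslant\|{\cdot}\|_{v,\sup}$-to-$1$, i.e. $\log\|(\mathrm{ev}_P)_x\|_v\leqslant 0$ when the metric on $P^*\overline L^{\otimes n}$ is the pullback one (this is exactly the compatibility of sup-norms with evaluation built into the definition of $\pi_*\overline L$). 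Chaining these: $\widehat\mu_{\max}(E_n)=\widehat\mu(\overline F)\leqslant\widehat\mu_{\max}(\overline F/\ker)\leqslant n\,h_{\overline L}(P)+h(\mathrm{ev}_P)\leqslant n\,h_{\overline L}(P)\leqslant n\sup\{h_{\overline L}(P')\mid P'\in\mathcal B\}$. Dividing by $n$ gives the first assertion. The ``in particular'' then follows by letting $n\to\infty$ and invoking the definition $\widehat\mu_{\max}^\pi(\overline L)=\lim_n\frac1n\widehat\mu_{\max}(\pi_*\overline L^{\otimes n})$ from \S 4.2, valid since $L$ is big.

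The main obstacle — really the only nontrivial point — is the bookkeeping around the change of base field from $K$ to $K(x)$: one must check that all the normalized quantities (height $h_{\overline L}(P)=\widehat{\deg}_n(P^*\overline L)$, slope, the local height $h(\mathrm{ev}_P)$) behave consistently under base extension, so that \eqref{Equ:pente} can legitimately be invoked over $K(x)$ and the conclusion transported back. This is standard in the slope machinery (cf. \cite{BostICM,Gaudron07}) but deserves an explicit word. Everything else is the verification that evaluation has nonpositive height, which is immediate from the sup-norm definition of $\pi_*\overline L$, and the Zariski-density argument for nonvanishing, which is elementary.
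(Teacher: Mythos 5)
Your proof is correct and takes a genuinely different route from the paper's. The paper applies the slope inequality \eqref{Equ:pente} once, to the injection of the \emph{whole} $\pi_*\overline L^{\otimes n}$ into the finite orthogonal direct sum $\bigoplus_{P\in\mathcal B_n}P^*\overline L^{\otimes n}$ (choosing a finite $\mathcal B_n\subset\mathcal B$ so that the composite with the projection is an isomorphism); this costs an additive $\log\rang(\pi_*L^{\otimes n})$ term coming from the height of that evaluation map, and the term only disappears after dividing by $n$ and letting $n\to\infty$. You instead isolate a subbundle $\overline F\subseteq\pi_*\overline L^{\otimes n}$ attaining the maximal slope and evaluate at a \emph{single} point $P\in\mathcal B$ outside its base locus, applying \eqref{Equ:pente} to the rank-one map $\overline F/\ker\hookrightarrow P^*\overline L^{\otimes n}$, which has nonpositive height. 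This avoids the error term and yields $\widehat\mu_{\max}(\pi_*\overline L^{\otimes n})\leqslant n\sup_{P\in\mathcal B}h_{\overline L}(P)$ exactly for every $n$ --- which is actually the stronger, $n$-by-$n$ statement of the lemma; the paper's own write-up recovers it only ``par passage \`a la limite''. The one place where your wording is loose is the assertion that passing to the quotient $\overline F\to\overline F/\ker$ ``does not decrease the maximal slope'': as a general principle for adelic bundles that is false (take a split sum $\overline N_1\oplus\overline N_2$ with $\widehat\mu(\overline N_1)>\widehat\mu(\overline N_2)$ and project onto $\overline N_2$). What makes the step valid here is that any $\overline F$ attaining $\widehat\mu_{\max}(\pi_*\overline L^{\otimes n})$ is automatically semistable, since $\widehat\mu_{\max}(\overline F)\leqslant\widehat\mu_{\max}(\pi_*\overline L^{\otimes n})=\widehat\mu(\overline F)$; and for a semistable $\overline F$, additivity of $\widehat{\deg}$ in short exact sequences with induced and quotient norms gives $\widehat\mu(\overline F/\ker)\geqslant\widehat\mu(\overline F)$. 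You should say this explicitly rather than appeal to a false general principle.
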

\begin{proof}
Pour tout entier $n\geqslant 1$, soit $\varphi_n$
l'application d'\'evaluation
\[H^0(X,L^{\otimes n})_{\overline K}
\longrightarrow\bigoplus_{P\in\mathcal B}P^*L^{\otimes
n}.\] Comme $\mathcal B$ est Zariski dense, $\varphi_n$
est injective. Par cons\'equent, il existe un
sous-ensemble fini $\mathcal B_n$ de $\mathcal B$ tel
que $p_n\varphi_n$ soit un isomorphisme, o\`u
\[p_n:\bigoplus_{P\in\mathcal B}P^*L^{\otimes
n}\longrightarrow \bigoplus_{P\in\mathcal B_n}P^*L^{\otimes
n}\] est la projection canonique. L'in\'egalit\'e de
pentes \eqref{Equ:pente} montre alors que \[
\widehat{\mu}_{\max}(\pi_*(\overline L^{\otimes n}))\le
\max_{P\in\mathcal B_n}nh_{\overline
L}(P)+\log\rang(\pi_*L^{\otimes n}).\] Par passage \`a
la limite on obtient l'in\'egalit\'e souhait\'ee.
\end{proof}

\begin{proof}[D\'emonstration de la proposition \ref{Pro:comparaison}]
Soit $\nu_{\overline L}$ la mesure asymptotique de
$\overline L$. Comme le support de $\nu_{\overline L}$
est born\'e sup\'erieurement par
$\widehat{\mu}_{\max}^{\pi}(\overline L)$, on obtient
\[\widehat{\mu}_{\max}^\pi(\overline L)\geqslant\int_{\mathbb
R}x\,\nu_{\overline L}(\mathrm{d}x),\]
d'o\`u
$\widehat{\mu}^\pi_{\max}(\overline
L)\geqslant\widehat{\mu}^\pi(\overline L)$.
Si de plus $\widehat{\mu}_{\max}^\pi>0$, on a 
\[\widehat{\mu}_{\max}^\pi(\overline L)\geqslant\int_{\mathbb R}
x_+\,\nu_{\overline
L}(\mathrm{d}x)\geqslant\int_{\mathbb
R}x\,\nu_{\overline L}(\mathrm{d}x),\] et donc
$\widehat{\mu}^\pi_{\max}(\overline
L)\geqslant\widehat{\mu}_{+}^\pi(\overline
L)\geqslant\widehat{\mu}^\pi(\overline L)$. Il reste
\`a v\'erifier la premi\`ere in\'egalit\'e de \eqref{Equ:inegalities}.

Soit $t>\widehat{\mu}_{\mathrm{ess}}(\overline L)$ et soit
$\mathcal B$ l'ensemble des points dans $X(\overline
K)$ dont la hauteur est inf\'erieure ou \'egale \`a $t$. Par d\'efinition la
famille $\mathcal B$ est Zariski dense dans
$X(\overline K)$. Le lemme \ref{Lem:minoration de mu
ess} montre alors que
$t\geqslant\widehat{\mu}^{\pi}_{\max}(\overline L)$. Comme
$t$ est arbitraire, on obtient
$\widehat{\mu}_{\mathrm{ess}}(\overline
L)\geqslant\widehat{\mu}^\pi_{\max}(\overline L)$.
\end{proof}

\begin{rema}\label{Rem:egalite}
On observe dans la d\'emonstration de la proposition
\ref{Pro:comparaison} que
$\widehat{\mu}_{\max}^\pi(\overline
L)=\widehat{\mu}_+^\pi(\overline L)$ implique que la
mesure $\nu_{\overline L}$ se r\'eduit \`a la mesure de
Dirac concentr\'ee en
$\widehat{\mu}_{\max}^\pi(\overline L )$. Cette
derni\`ere condition est \'equivalente \`a
l'\'egalit\'e $\widehat{\mu}_{\max}^\pi(\overline
L)=\widehat{\mu}^\pi(\overline L)$. Dans ce cas-l\`a,
le polygone de Harder-Narasimhan asymptotique de
$\overline L$ (voir \cite{Chen08}) se r\'eduit \`a un
morceau de droite, et on dit que $\overline L$ est {\it
asymptotiquement semi-stable}.
\end{rema}

\section{Applications}

Dans ce paragraphe, on d\'eveloppe des applications du
crit\`ere d'\'equidistribution \'etabli dans
\S\ref{Sec:critere}. D'abord, on explique comment
interpr\'eter des r\'esultats classiques via le
principe variationnel par la d\'erivabilit\'e de
certain invariant arithm\'etique. Ensuite, on
d\'emontre un r\'esultat d'\'equidistribution en
utilisant la d\'erivabilit\'e de la pente positive
asymptotique, \'etablie dans \cite{Chen_Diff}.
\subsection{Interpr\'etation des r\'esultats classiques}
Soient $\pi:X\rightarrow\Spec K$ un $K$-sch\'ema
projectif et int\`egre et $\overline L$ un fibr\'e
inversible ad\'elique sur $X$. Soit
$\underline{x}=(x_n)_{n\geqslant 1}$ une suite de
points alg\'ebriques dans $X$. Dans les r\'esultats
d'\'equidistribution via le principe variationnel comme
dans
\cite{Szpiro_Ullmo_Zhang,Ullmo98,Zhang98,Autissier01,Yuan07,Ber_Bou08},
les hypoth\`eses suivantes sont suppos\'ees:
\begin{enumerate}[({\bf H}1)]
\item la suite $(x_n)_{n\geqslant 1}$ est
g\'en\'erique, autrement dit, toute sous-vari\'et\'e
ferm\'ee $Y$ de $X$ qui est autre que $X$ ne contient
qu'un nombre fini de points parmi les $x_n$;
\item la suite de hauteur $(h_{\overline L}(x_n))_{n\geqslant
1}$ converge vers $\widehat{\mu}^\pi(\overline L)$.
\end{enumerate}
Pour tout fibr\'e inversible ad\'elique $\overline M$,
on note \[\varphi_{\underline{x}}(\overline M):=
\liminf_{n\rightarrow+\infty}h_{\overline M}(x_n)\in
[-\infty,+\infty] .\]

\begin{prop}
Si la suite $\underline{x}$ est g\'en\'erique, alors
$\varphi_{\underline{x}}(\overline M )\geqslant
\widehat{\mu}_{\mathrm{ess}}(\overline M)$ pour tout
$\overline M\in\widehat{\mathrm{Pic}}(X)$.
\end{prop}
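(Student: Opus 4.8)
The plan is to unwind the definitions of $\varphi_{\underline{x}}$ and of the essential minimum $\widehat{\mu}_{\mathrm{ess}}$, using the genericity hypothesis ({\bf H}1) only to control the tail of the sequence. First I would fix an arbitrary real number $t < \widehat{\mu}_{\mathrm{ess}}(\overline M)$; if no such $t$ exists, i.e.\ $\widehat{\mu}_{\mathrm{ess}}(\overline M) = -\infty$, there is nothing to prove. By the very definition of the essential minimum as a supremum of $\inf_{x\in U(\overline K)} h_{\overline M}(x)$ over the nonempty open subschemes $U \subsetneq X$, there exists such a $U$ with $\inf_{x\in U(\overline K)} h_{\overline M}(x) > t$, hence $h_{\overline M}(x) > t$ for every algebraic point $x \in U(\overline K)$.

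Next I would invoke genericity. The complement $Z := X \setminus U$ is a proper closed subscheme of $X$; its reduced subscheme is a finite union of closed subvarieties, each distinct from $X$, so by hypothesis ({\bf H}1) each of them — and therefore $Z$ itself — contains only finitely many of the points $x_n$. Consequently there is an integer $N$ such that $x_n \in U(\overline K)$ for all $n \geq N$, and thus $h_{\overline M}(x_n) > t$ for all $n \geq N$. Passing to the lower limit gives $\varphi_{\underline{x}}(\overline M) = \liminf_{n\to\infty} h_{\overline M}(x_n) \geq t$.

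Finally, letting $t$ range over all real numbers below $\widehat{\mu}_{\mathrm{ess}}(\overline M)$ yields $\varphi_{\underline{x}}(\overline M) \geq \widehat{\mu}_{\mathrm{ess}}(\overline M)$, which is the assertion. I do not expect any genuine obstacle here: the argument is a direct translation of the definitions, and the only point requiring (minor) care is that a proper \emph{closed subscheme}, not merely an irreducible subvariety, still meets a generic sequence in finitely many terms, which one gets by decomposing it into its irreducible components. Note also that this bound makes the previously established inequality $\widehat{\mu}_{\mathrm{ess}}(\overline M) \geq \widehat{\mu}^\pi(\overline M)$ from Proposition~\ref{Pro:comparaison} immediately relevant, since it shows $\varphi_{\underline{x}}(\overline M)$ is bounded below by a quantity whose differentiability can then be exploited through Proposition~\ref{Pro:critere de differentiabilite}.
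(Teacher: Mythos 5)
Your proof is correct and follows essentially the same route as the paper's: observe that genericity forces any nonempty open subscheme $U$ to contain all but finitely many of the $x_n$, so $\liminf_n h_{\overline M}(x_n)\geqslant\inf_{x\in U(\overline K)}h_{\overline M}(x)$, and then take the supremum over $U$. The only difference is cosmetic — you insert an auxiliary threshold $t<\widehat{\mu}_{\mathrm{ess}}(\overline M)$ where the paper just works directly with the infimum over $U$ — and your explicit remark that a proper closed subscheme must first be decomposed into irreducible components before applying hypothesis ({\bf H}1) is a valid (if minor) point that the paper leaves implicit.
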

\begin{proof}
Soit $U$ un sous-sch\'ema ouvert non-vide de $X$. Comme
la suite $\underline{x}$ est g\'en\'erique, on obtient
que $U$ contient tous sauf un nombre fini de points de
$\underline{x}$. Par cons\'equent,
\[\varphi_{\underline{x}}(\overline M)\geqslant\liminf_{n\rightarrow+\infty}h_{\overline L}(x_n)
\geqslant\inf_{x\in U(\overline K)}h_{\overline M}(x),
\]
d'o\`u $\varphi_{\underline{x}}(\overline
M)\geqslant\widehat{\mu}_{\mathrm{ess}}(\overline M)$.
\end{proof}

\begin{coro}\label{Cor:inegalites}
On suppose que $\overline L$ est gros. Sous les hypoth\`eses ({\bf H}1) et ({\bf H}2), on a
\[\varphi_{\underline{x}}(\overline L)=\widehat{\mu}_{\mathrm{ess}}
(\overline L)=\widehat{\mu}_{\max}^\pi(\overline
L)=\widehat{\mu}_+^\pi(\overline
L)=\widehat{\mu}^\pi(\overline L).\]
\end{coro}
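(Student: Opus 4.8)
The plan is to chain together the inequalities and identities established earlier in the paper. Recall that Corollary \ref{Cor:inegalites} claims, under hypotheses (\textbf{H}1) and (\textbf{H}2), the string of equalities
\[
\varphi_{\underline{x}}(\overline L)=\widehat{\mu}_{\mathrm{ess}}(\overline L)=\widehat{\mu}_{\max}^\pi(\overline L)=\widehat{\mu}_+^\pi(\overline L)=\widehat{\mu}^\pi(\overline L).
\]
Since $\overline L$ is big and (\textbf{H}2) forces $h_{\overline L}(x_n)\to\widehat{\mu}^\pi(\overline L)\in\mathbb R$, the pente asymptotique $\widehat{\mu}^\pi(\overline L)$ is finite, hence $\widehat{\mu}_{\max}^\pi(\overline L)\geqslant\widehat{\mu}^\pi(\overline L)$ gives $\widehat{\mu}_{\max}^\pi(\overline L)>0$, so $\overline L$ is arithmétiquement gros and both lines of Proposition \ref{Pro:comparaison} are available.

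First I would observe that $\varphi_{\underline{x}}(\overline L)=\widehat{\mu}^\pi(\overline L)$ is immediate from (\textbf{H}2): the sequence $(h_{\overline L}(x_n))_n$ converges to $\widehat{\mu}^\pi(\overline L)$, so its $\liminf$ equals $\widehat{\mu}^\pi(\overline L)$. Next, the preceding proposition (the one just proved, that genericity implies $\varphi_{\underline{x}}(\overline M)\geqslant\widehat{\mu}_{\mathrm{ess}}(\overline M)$ for all $\overline M$) applied to $\overline M=\overline L$ yields
\[
\widehat{\mu}^\pi(\overline L)=\varphi_{\underline{x}}(\overline L)\geqslant\widehat{\mu}_{\mathrm{ess}}(\overline L).
\]
On the other hand, Proposition \ref{Pro:comparaison} gives the reverse chain
\[
\widehat{\mu}_{\mathrm{ess}}(\overline L)\geqslant\widehat{\mu}_{\max}^\pi(\overline L)\geqslant\widehat{\mu}_+^\pi(\overline L)\geqslant\widehat{\mu}^\pi(\overline L),
\]
using \eqref{Equ:inegalities} for the first step and \eqref{Equ:inegalities2} for the last two (legitimate because $\widehat{\mu}_{\max}^\pi(\overline L)>0$). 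Combining the two displays, every term is squeezed between $\widehat{\mu}^\pi(\overline L)$ on both ends, so all the inequalities are equalities, which is exactly the assertion of the corollary.

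There is really no serious obstacle here: the corollary is a formal consequence of results already in hand, and the only point requiring a moment's care is verifying that $\widehat{\mu}_{\max}^\pi(\overline L)>0$ so that the second block of inequalities in Proposition \ref{Pro:comparaison} applies — but as noted, this follows at once from bigness of $L$ together with the finiteness of $\widehat{\mu}^\pi(\overline L)$ coming from (\textbf{H}2). I would write the proof in two lines: cite (\textbf{H}2) for $\varphi_{\underline{x}}(\overline L)=\widehat{\mu}^\pi(\overline L)$, cite the genericity proposition for $\varphi_{\underline{x}}(\overline L)\geqslant\widehat{\mu}_{\mathrm{ess}}(\overline L)$, and then invoke Proposition \ref{Pro:comparaison} to close the loop.
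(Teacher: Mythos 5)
Your proof is correct and follows exactly the same route as the paper's one-line argument: (\textbf{H}2) gives $\varphi_{\underline{x}}(\overline L)=\widehat{\mu}^\pi(\overline L)$, the genericity proposition gives $\varphi_{\underline{x}}(\overline L)\geqslant\widehat{\mu}_{\mathrm{ess}}(\overline L)$, and Proposition \ref{Pro:comparaison} closes the chain. One small remark: the opening paragraph, where you try to \emph{derive} $\widehat{\mu}_{\max}^\pi(\overline L)>0$ from the finiteness of $\widehat{\mu}^\pi(\overline L)$, is both unnecessary and logically flawed — finiteness of the lower bound $\widehat{\mu}^\pi(\overline L)$ tells you nothing about the sign of $\widehat{\mu}_{\max}^\pi(\overline L)$. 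But this causes no harm, because the hypothesis \og $\overline L$ est gros\fg{} in the corollary already \emph{means} arithmetic bigness, i.e.\ $\widehat{\mu}_{\max}^\pi(\overline L)>0$, by the very equivalence stated in Proposition \ref{Pro:comparaison}; you can simply invoke it directly rather than re-derive it.
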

\begin{proof}
C'est une cons\'equence imm\'ediate de l'in\'egalit\'e
$\varphi_{\underline{x}}(\overline
L)\geqslant\widehat{\mu}_{\mathrm{ess}}(\overline L)$
et de \eqref{Equ:inegalities}.
\end{proof}

D'apr\`es le th\'eor\`eme \ref{Thm:crietere
equidistribution} et la proposition \ref{Pro:critere de
differentiabilite}, pour que $\underline{x}$ satisfasse
\`a la condition d'\'equidistribution en $v$, il suffit
de montrer que la fonction $\widehat{\mu}^\pi$ est
diff\'erentiable en $\overline L$ pour les directions dans
$C_v(X)$. Il s'av\`ere que, dans les articles cit\'es
plus haut, les auteurs ont effectivement d\'emontr\'e
cette diff\'erentiabilit\'e et ont calcul\'e la
diff\'entielle de $\widehat{\mu}^\pi$ en $\overline L$,
bien que ces r\'esultats n'ont pas \'et\'e \'enonc\'es
de cette fa\c{c}on.

\begin{exem}
On suppose que $\overline L$ est ample au sens de
\cite{Zhang95}, alors pour tout $\overline
M\in\widehat{\Pic}(X)$ de m\'etriques lisses, le
fibr\'e inversible ad\'elique $\overline L^{\otimes
n}\otimes\overline M$ est ample lorsque $n$ est
suffisamment grand. Donc le th\'eor\`eme de
Hilbert-Samuel de Gillet et Soul\'e \cite{Gillet-Soule}
montre que \[\widehat{\mu}^\pi(\overline L^{\otimes
n}\otimes\overline M)=\frac{(n\widehat{c}_1(\overline
L)+\widehat{c}_1(\overline M))^{d+1}}{[K:\mathbb Q
](d+1)(n{c_1}( L)+c_1(M))^d}\] Par cons\'equent, on a
\[D_{\overline L}\widehat{\mu}^\pi(\overline M)=
\frac{\widehat{c}_1(\overline
L)^d\widehat{c}_1(\overline M)}{[K:\mathbb Q]c_1(L)^d}-
d\widehat{\mu}^\pi(\overline
L)\frac{c_1(L)^{d-1}c_1(M)}{c_1(L)^d}.
\]
Cette expression est additive par rapport \`a
$\overline M$. De plus, si $\overline M$ est de la
forme $\mathcal O_\sigma(f)$
($\sigma:K\rightarrow\mathbb C$), o\`u $f$ est une
function lisse $X_{\sigma}^{\mathrm{an}}$, alors
\[D_{\overline L}\widehat{\mu}^\pi(\mathcal O_{\sigma}(f))
=\frac{\widehat{c}_1(\overline
L)^d\widehat{c}_1(\mathcal O_\sigma(f))}{[K:\mathbb
Q]c_1(L)^d}.\] Cela montre que
$\frac{\widehat{c}_1(\overline L )^d}{[K:\mathbb
Q]c_1(L)^d}$ peut \^etre consid\'er\'ee comme une mesure de probabilit\'e de Radon
sur $X_{\sigma}^{\mathrm{an}}$ qui est la mesure limite
de $(\eta_{x_n,\sigma})_{n\geqslant 1}$.
\end{exem}

Depuis l'article \cite{Szpiro_Ullmo_Zhang}
o\`u cette strat\'egie a \'et\'e propos\'ee, on cherche
\`a affaiblir la condition de positivit\'e de
$\overline L$. On r\'esume quelques exemples de tels r\'esultats en les \'enon\c{c}ant dans le langage de differentiabilit\'e.
\begin{enumerate}[1)]
\item Dans \cite{Autissier01}, Autissier a montr\'e que, si $X$ est de dimension $1$, alors la fonction $\widehat{\mu}^\pi$ est diff\'erentiable en tout $\overline L$ tel que $L$ est ample pour les directions dans $C_\sigma(X)$.
\item Dans \cite{Chambert-Loir_Thuillier}, en utilisant l'analogue de l'in\'egalit\'e de Siu en g\'eom\'etrie d'Arakelov \cite{Yuan07}, les auteurs ont montr\'e que, si $L$ est ample et si les m\'etriques sur $L$ sont semi-positives, alors la fonction $\widehat{\mu}^\pi$ est diff\'erentiable pour les directions $\overline M$ int\'egrables (c'est-\`a-dire que $\overline M$ s'\'ecrit comme la diff\'erence de deux fibr\'es inversibles ad\'eliques amples).
\item Dans \cite{Ber_Bou08}, en utilisant le noyau de Bergman, les auteurs ont montr\'e que, si $L$ est gros et si $\widehat{\mu}^\pi(\overline L)$ est fini, alors la fonction $\widehat{\mu}^\pi$ est diff\'erentiable pour les directions dans $C_\sigma(X)$, o\`u $\sigma\in\Sigma_\infty$.
\end{enumerate}

\subsection{Un r\'esultat d'\'equidistribution}

Dans ce sous-paragraphe, on d\'emontre le r\'esultat suivant~:
\begin{theo}\label{Thm:equidistr}
On suppose que $\overline L$ est gros. Si la suite $\underline{x}$ est g\'en\'erique et si la suite de hauteurs $(h_{\overline L}(x_n))_{n\geqslant 1}$ converge vers $\widehat{\mu}_+^\pi(\overline L)$, alors la suite $\underline{x}$ v\'erifie la condition d'\'equidistribution.
\end{theo}
\begin{proof}
D'apr\`es \cite[th\'eor\`eme 1.1]{Chen_Diff}, la fonction $\widehat{\mu}_+^\pi$ est diff\'erentiable en $\overline L$ pour les directions dans $\widehat{\Pic}(X)$. Par cons\'equent, la suite $\underline{x}$ satisfait \`a la condition d'\'equidistribution, compte tenu du th\'eor\`eme \ref{Thm:crietere equidistribution}.
\end{proof}

\begin{rema}
Dans le th\'eor\`eme \ref{Thm:equidistr}, on ne suppose aucune condition de positivit\'e sur les m\'etriques de $\overline L$.
\end{rema}

\begin{coro}
On suppose que $L$ est gros. Si la suite $\underline{x}$ est g\'en\'erique et si la suite de hauteurs $(h_{\overline L}(x_n))_{n\geqslant 1}$ converge vers $\widehat{\mu}^\pi(\overline L)$, alors la suite $\underline{x}$ satisfait \`a la condition d'\'equidistribution.
\end{coro}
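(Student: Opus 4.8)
Le plan est de se ramener au cas o\`u $\overline L$ est arithm\'etiquement gros, puis d'appliquer le th\'eor\`eme \ref{Thm:equidistr} par l'interm\'ediaire du corollaire \ref{Cor:inegalites}. L'observation de d\'epart est que la condition d'\'equidistribution ne fait intervenir que la suite $\underline x$ (et la place $v$), et nullement le fibr\'e $\overline L$~: il suffit donc de la v\'erifier apr\`es avoir remplac\'e $\overline L$ par une torsion convenable. Comme la suite $(h_{\overline L}(x_n))_{n\geqslant 1}$ converge dans $\mathbb R$, on a $\widehat{\mu}^\pi(\overline L)\in\mathbb R$. Je fixerais alors un fibr\'e vectoriel ad\'elique $\overline N$ de rang $1$ sur $\Spec K$, de fibr\'e sous-jacent $\mathcal O_K$ et de m\'etriques triviales aux places finies, tel que $c:=\widehat{\deg}_n(\overline N)>\max(0,-\widehat{\mu}^\pi(\overline L))$ (on obtient un tel $\overline N$ en remettant \`a l'\'echelle les m\'etriques archim\'ediennes du fibr\'e trivial, par exemple en prenant $c=1+|\widehat{\mu}^\pi(\overline L)|$), et je poserais $\overline L':=\overline L\otimes\pi^*\overline N$, qui est encore un fibr\'e inversible ad\'elique sur $X$ de fibr\'e sous-jacent $\mathcal O_X\otimes L=L$.

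La v\'erification que cette torsion ne fait que translater les invariants concern\'es par la constante $c$ est \'el\'ementaire. Le fibr\'e sous-jacent $L'=L$ est toujours gros. Pour tout point alg\'ebrique $x$, le fibr\'e $x^*\pi^*\overline N$ est le changement de base de $\overline N$ \`a $K(x)$, dont le degr\'e d'Arakelov normalis\'e vaut encore $c$~; l'additivit\'e de la hauteur donne donc $h_{\overline L'}(x)=h_{\overline L}(x)+c$. Enfin, la formule de projection $\pi_*(\overline L'^{\otimes n})=\pi_*(\overline L^{\otimes n})\otimes\overline N^{\otimes n}$ (compatible aux m\'etriques puisqu'on tord par un tir\'e en arri\`ere) et les identit\'es $\chi(\overline E\otimes\overline M)=\chi(\overline E)+\rang(E)\widehat{\deg}(\overline M)$, $\widehat{\mu}_{\max}(\overline E\otimes\overline M)=\widehat{\mu}_{\max}(\overline E)+\widehat{\mu}(\overline M)$ pour $\overline M$ de rang $1$ entra\^{\i}nent, en passant aux invariants asymptotiques, les \'egalit\'es $\widehat{\mu}^\pi(\overline L')=\widehat{\mu}^\pi(\overline L)+c$ et $\widehat{\mu}_{\max}^\pi(\overline L')=\widehat{\mu}_{\max}^\pi(\overline L)+c$. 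Le choix de $c$ et l'in\'egalit\'e $\widehat{\mu}_{\max}^\pi(\overline L)\geqslant\widehat{\mu}^\pi(\overline L)$ de \eqref{Equ:inegalities} donnent alors $\widehat{\mu}_{\max}^\pi(\overline L')>0$, c'est-\`a-dire que $\overline L'$ est arithm\'etiquement gros. Quitte \`a remplacer $\overline L$ par $\overline L'$, je supposerais donc d\'esormais $\overline L$ arithm\'etiquement gros, tout en conservant les hypoth\`eses de l'\'enonc\'e.

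Il ne reste plus qu'\`a conclure. La suite $\underline x$ \'etant g\'en\'erique et v\'erifiant $h_{\overline L}(x_n)\to\widehat{\mu}^\pi(\overline L)$, les hypoth\`eses ({\bf H}1) et ({\bf H}2) sont satisfaites, et le corollaire \ref{Cor:inegalites} (applicable car $\overline L$ est maintenant gros) fournit en particulier l'\'egalit\'e $\widehat{\mu}_+^\pi(\overline L)=\widehat{\mu}^\pi(\overline L)$. Par cons\'equent la suite de hauteurs $(h_{\overline L}(x_n))_{n\geqslant 1}$ converge vers $\widehat{\mu}_+^\pi(\overline L)$, et le th\'eor\`eme \ref{Thm:equidistr} --- dont l'hypoth\`ese de grosseur de $\overline L$ est pr\'ecis\'ement ce qu'on vient d'assurer --- montre que $\underline x$ v\'erifie la condition d'\'equidistribution.

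Aucune \'etape n'est r\'eellement difficile~: le seul point demandant un minimum de soin est la v\'erification du comportement de la hauteur, de $\widehat{\mu}^\pi$, de $\widehat{\mu}_{\max}^\pi$ et de la grosseur sous la torsion par $\pi^*\overline N$, qui se ram\`ene \`a l'invariance du degr\'e d'Arakelov normalis\'e par changement de base et \`a la formule de projection, chacune de ces quantit\'es n'\'etant que translat\'ee par $c$~; le reste de l'argument est purement formel.
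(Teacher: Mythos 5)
Votre preuve est correcte et suit essentiellement la m\^eme strat\'egie que celle de l'article~: on tord $\overline L$ par le tir\'e en arri\`ere $\pi^*\overline N$ d'un fibr\'e ad\'elique de rang $1$ sur la base pour le rendre arithm\'etiquement gros, puis on applique le corollaire~\ref{Cor:inegalites} et le th\'eor\`eme~\ref{Thm:equidistr}. La seule diff\'erence est que l'article invoque la remarque 4.2.16 de \cite{Chen08} pour l'existence d'un tel $\overline N$, alors que vous la construisez explicitement \`a partir de l'in\'egalit\'e $\widehat{\mu}_{\max}^\pi(\overline L)\geqslant\widehat{\mu}^\pi(\overline L)$ de \eqref{Equ:inegalities} et du comportement des invariants sous la torsion --- ce qui rend l'argument l\'eg\`erement plus autonome sans en changer la substance.
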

\begin{proof}
Comme $L$ est gros, il existe un fibr\'e ad\'elique $\overline M$ sur $\Spec\mathcal O_K$ tel que $\overline N:=\overline L\otimes\pi^*\overline M$ soit gros (cf. \cite[remarque 4.2.16]{Chen08}). Pour tout entier $n\geqslant 1$, on a $h_{\overline N}(x_n)=h_{\overline L}(x_n)+\widehat{\mu}(\overline M)$. D'autre part, on a $\widehat{\mu}^\pi(\overline N)=\widehat{\mu}^\pi(\overline L)+\widehat{\mu}(\overline M)$. Donc $(h_{\overline N}(x_n))_{n\geqslant 1}$ converge vers $\widehat{\mu}^\pi(\overline N)$. D'apr\`es le corollaire \ref{Cor:inegalites}, on obtient $\widehat{\mu}^\pi(\overline N)=\widehat{\mu}^\pi_+(\overline N)$. D'o\`u $\underline{x}$ satisfait \`a la condition d'\'equidistribution, compte tenu du th\'eor\`eme \ref{Thm:equidistr}.
\end{proof}

\begin{rema}
Le th\'eor\`eme \ref{Thm:equidistr} est en fait une nouvelle d\'emonstration des r\'esultats classiques. On suppose que $\overline L$ est gros. Bien que en g\'en\'eral $\widehat{\mu}_+^\pi(\overline L)$ est plus grand ou \'egal \`a $\widehat{\mu}^\pi(\overline L)$, mais l'hypoth\`ese $\varphi_{\underline{x}}(\overline L)=\widehat{\mu}_+^\pi(\overline L)$ force ces deux quantit\'es \`a \^etre \'egales, comme montr\'e dans la remarque \ref{Rem:egalite}. D'autre part, l'hypoth\`ese ({\bf H}2) semble \^etre pr\'evil\'egi\'ee aux applications que l'on dispose. Par exemple, on observe dans \cite{Sombra05} que le minimum essentiel d'un espace projectif est toujours strictement plus grand que sa hauteur normalis\'ee. Plus pr\'ecis\'ement, si $X=\mathbb P_K^n$ et si $\overline L$ est $\mathcal O_X(1)$ muni des m\'etriques de Fubini-Study, alors on a
\[\widehat{\mu}_{\mathrm{ess}}(\overline L)=\frac{1}{2}\log(n+1),\qquad \widehat{\mu}^\pi(\overline L)=\frac{1}{2}\sum_{j=2}^{n+1}\frac 1j.\]
Cette observation conduit \`a la question suivante~: est-ce que d'autre minoration du minimum essentiel permet d'avoir un \'enonc\'e d'\'equidistribution plus g\'en\'eral? Un candidat est sans doute la pente maximale asymptotique $\widehat{\mu}^\pi_{\max}$. Voici alors une question plus pr\'ecise~: quel est le domain de diff\'erentiabilit\'e de la fonction $\widehat{\mu}_{\max}^\pi$?
\end{rema}

\backmatter
\bibliography{chen}
\bibliographystyle{smfplain}

\end{document}